\newcommand{\ind}{\mathds}
\newcommand{\floor}[1]{{\lfloor #1 \rfloor}}
\newcommand{\ceil}[1]{{\lceil #1 \rceil}}
\newcommand{\Z}{\ensuremath{\mathbb{Z}}}
\newcommand{\N}{\ensuremath{\mathbb{N}}}
\newcommand{\R}{\ensuremath{\mathbb{R}}}
\newcommand{\E}{\ensuremath{\mathbb{E}}}
\renewcommand{\P}{\ensuremath{\mathbb{P}}}
\newtheorem{theorem}{Theorem}[section]
\newtheorem{lemma}[theorem]{Lemma}
\newtheorem{proposition}[theorem]{Proposition}
\theoremstyle{definition}
\newtheorem{definition}[theorem]{Definition}
\newtheorem{remark}[theorem]{Remark}
\numberwithin{equation}{section}
\definecolor{Red}{rgb}{1,0,0}
\definecolor{Blue}{rgb}{0,0,1}
\definecolor{Olive}{rgb}{0.41,0.55,0.13}
\definecolor{Yarok}{rgb}{0,0.5,0}
\definecolor{Green}{rgb}{0,1,0}
\definecolor{MGreen}{rgb}{0,0.8,0}
\definecolor{DGreen}{rgb}{0,0.55,0}
\definecolor{Yellow}{rgb}{1,1,0}
\definecolor{Cyan}{rgb}{0,1,1}
\definecolor{Magenta}{rgb}{1,0,1}
\definecolor{Orange}{rgb}{1,.5,0}
\definecolor{Violet}{rgb}{.5,0,.5}
\definecolor{Purple}{rgb}{.75,0,.25}
\definecolor{Brown}{rgb}{.75,.5,.25}
\definecolor{Grey}{rgb}{.7,.7,.7}
\definecolor{Black}{rgb}{0,0,0}
\newcommand{\ignore}[1]{{}}
\renewcommand{\P}{\ensuremath{\mathbb{P}}}
\title{Asymptotics for Lipschitz percolation above tilted planes}
\author{
Alexander Drewitz $^{1}$
\and
Michael Scheutzow $^{2}$
\and Maite Wilke-Berenguer $^{2}$
}
\date{\today}
\begin{document}
\normalem

\maketitle
{\footnotesize

\thanks{$^1$ Department of Mathematics, Columbia University, 2990 Broadway, New York City, NY 10027, USA;\\
{\it e-mail:} \href{mailto:drewitz@math.columbia.edu}{drewitz@math.columbia.edu}}

\thanks{$^2$ Institut f\"ur Mathematik, Technische Universit\"at Berlin, MA 7-5, Stra\ss e des 17. Juni 136, 10623 Berlin, Germany;\\
{\it e-mail:} \href{mailto:ms@math.tu-berlin.de}{ms@math.tu-berlin.de}, \href{mailto:wilkeber@math.tu-berlin.de}{wilkeber@math.tu-berlin.de}
}}

\maketitle

\begin{abstract}
We consider  Lipschitz percolation in $d+1$ dimensions above planes tilted by an angle $\gamma$ along one or several coordinate axes.
 In particular, we are interested in the asymptotics of the critical probability 
as $d \to \infty$ as well as $\gamma \uparrow \pi/4.$ Our principal results show that the convergence of the critical probability to $1$
is polynomial as $d\to \infty$ and $\gamma \uparrow \pi/4.$ 
In addition, we identify the correct order of this polynomial convergence and in $d=1$ we also obtain the correct prefactor.
\end{abstract}

\noindent \textbf{Mathematics Subject Classification (2010):} 60K35, 82B20, 82B41, 82B43

\noindent \textbf{Keywords:} Lipschitz percolation, $\rho$-percolation, random surface


\section{Introduction and main results}

The model of Lipschitz percolation was introduced in \cite{DiDoGrHoSc-10}. 
Since its introduction it has been the subject of numerous articles 
and has shown various connections and applications to other topics such as lattice embeddings, plaquette, entanglement and comb percolation or the pinning of interfaces in random media (see e.g.
\cite{GrHo-10}, \cite{DiDoSc-11}, \cite{GrHo-12}, \cite{GrHo-12b}, \cite{HoMa-14}).
In the present article we investigate the critical probability for the existence of a Lipschitz surface of open sites
that lies above a hyperplane which is tilted (along one or several
coordinate axes) by an angle $\gamma$. We are particularly interested in the asymptotics of this critical probability as
$d\to \infty$ and $\gamma \uparrow \pi/4$. An immediate consequence of our results is the existence of non-negative stationary supersolutions to the problem
$$
u_t(x,t) = \Delta u(x,t) + f(x, \bar a\cdot x + u(x,t),\omega) + F
$$
for $\bar a\in(-\alpha,\alpha)^d$ and $F>0$ independent of $\bar a$ for some $\alpha>0$ in the sense of \cite{DiDoSc-11}, i.e., where $f$ describes randomly placed local obstacles. This setting is related to the study of singular homogenization problems, since -- as a cell problem -- it determines the effective velocity $H(\bar a)$ of an interface with slope $\bar a$.

Our context is that of site percolation in $\Z^{d+1}$ with parameter $p \in [0,1]$. That is, $\Omega:= \{0,1\}^{\mathbb Z^{d+1}}$ is the set of configurations and the corresponding probability distribution $\P_p$ is the product measure of Bernoulli distributions with parameter $p$. A site $x \in \mathbb Z^{d+1}$ is called \emph{open} (with respect to $\omega$) if $\omega(x)=1$, and \emph{closed} 
if $\omega(x)=0$.
 
Our main object of study are Lipschitz functions and surfaces defined as follows. A function $F: \Z^d\rightarrow \Z$ is called \emph{Lipschitz} if for any $\bar x,\bar y \in \Z^d$ the implication 
\begin{align*}
 \Vert \bar x-\bar y \Vert_1 =1 \Rightarrow \vert F(\bar x) - F(\bar y)\vert \leq 1
\end{align*}
holds true. We use the term \emph{Lipschitz surface} to refer to a subset of $\Z^{d+1}$ that is the graph of a Lipschitz function. Furthermore, given a realization $\omega \in \Omega$, we
 call the Lipschitz surface \emph{open} if all sites 
 in the Lipschitz surface are open in the sense of site percolation, i.e., if $\omega(\bar x, F(\bar x))= 1$ for all ${\bar x \in \Z^d}$.

It was proven in \cite{DiDoGrHoSc-10} that the event of existence of an open Lipschitz surface completely contained in the upper half-plane $\Z^d\times\N$ undergoes a phase transition. That is, for any dimension $d \ge 1$ there exists a critical probability $p_L(d) \in (0,1)$ such that the following holds: For $p < p_L(d)$ one has that $\P_p$-a.s. there exists no open Lipschitz
surface in $\Z^d\times\N$, whereas for $p > p_L(d)$ one has that $\P_p$-a.s. there exists an open Lipschitz surface in $\Z^d\times\N$.
  Furthermore, an upper bound for $p_L(d)$ and tail estimates for the height of the minimal surface  were established
  for $p$ sufficiently large. These results
  were improved in \cite{GrHo-12}, where in particular exponential tails for the height of the minimal Lipschitz surface 
  have been established for all $p>p_L(d)$. The results were complemented with an asymptotic lower bound yielding $1/d$ as the correct order of magnitude for $1-p_L(d)$. 
  Applications and related results can be found in \cite{DiDoSc-11}, \cite{
GrHo-12b}, \cite{GrHo-10}.

While the investigation of Lipschitz percolation up to now has been focused on  Lipschitz surfaces that stay above the plane
$L:=\Z^d\times\{0\}$, we are interested in the effect of \lq tilting\rq\, this plane. To make this more precise let us define  
 for any $d \in \N, \, \alpha \in [0,1)$ and $\eta \in \{-1,0,+1\}^d$ the \emph{tilted planes}
 \begin{align*}
  L^{\alpha,d}_{\eta} := \Big\{ (x_1, \ldots, x_{d+1}) \in \Z^{d+1} \mid x_{d+1} = 
  \Big \lfloor \alpha \sum_{i=1}^d \eta_ix_i \Big \rfloor\Big\}.
 \end{align*}
For computational convenience we introduce the parameter $\alpha$ as in the above definition, instead of directly working with the angle $\gamma $ by which a plane is tilted along all the coordinate axes in direction $e_i$ for which $\eta_i = 1$, $1 \le i \le d,$
in the above choice of $\eta$ (and $-\gamma$ in the case that $\eta = -1$). However, given $\eta,$ there is a natural
 one-to-one correspondence between $\alpha$ and 
the angle $\gamma$. Also, note that the case of $\alpha =0$ as well as the case $\eta = 0$ correspond to $\gamma = 0$ and thus to standard Lipschitz percolation.
The restriction to $\alpha \in [0,1)$, resp. $\gamma < \pi/4$, is natural, once one realizes that for 
$\eta \ne 0$, $\alpha \geq 1$ (resp. $\gamma \geq \pi/4$), and any $p<1$, $\P_p$-a.s. there exists no open Lipschitz surface above the plane $L^{\alpha,d}_{\eta}$. 

In the study of Lipschitz percolation above tilted planes, the related concept of Lipschitz percolation above \lq inverted pyramids\rq\ turns out to be helpful. Thus, we introduce for any $d \in \N, \, \alpha \in [0,1)$ and $\eta \in \{-1,0,+1\}^d$ the 
\emph{inverted pyramid} $\nabla_\eta^{\alpha, d}$ as
\begin{align*}
& \nabla^{\alpha, d}_\eta 
:=
	\Big \{(x_1, \dots, x_{d+1}) \in \mathbb Z^{d+1} \mid x_{d+1} = 
	\max_{\substack{\eta' \in \{-1,0,+1\}^d\\ \Vert \eta' \Vert_1 = \Vert \eta \Vert_1}} 
	\Big \{ \Big \lfloor \alpha \sum_{i=1}^d \eta_i'  x_i \Big \rfloor \Big \} \Big \}.
\end{align*}
We can now formulate our main result:

\begin{theorem}
 \label{thm:mainthm_intro}
 There exists a phase transition for both Lipschitz percolation above planes and
 Lipschitz percolation above inverted pyramids, and their critical probabilities coincide. This critical probability $p_L(\alpha,d,\eta)$ is nontrivial and depends on $\eta$ only via
 $\Vert \eta \Vert_1$. Furthermore,
 \begin{equation}\label{eq:mr_d}
 1-p_L(\alpha, d, \eta) \asymp d^{-\frac{1}{1-\alpha}}, \qquad \text{ as } d \rightarrow \infty, 
 \end{equation}
 and
\begin{equation}\label{mr_alpha}
 1-p_L(\alpha, d, \eta) \asymp (1-\alpha)^d, \qquad \text{ as }\alpha \rightarrow 1. 
 \end{equation}
\end{theorem}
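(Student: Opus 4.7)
My plan splits into three parts: the phase transition with pyramid-plane equivalence, the $d\to\infty$ asymptotics, and the $\alpha\to 1$ asymptotics.

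\textbf{Phase transition and pyramid reduction.} Existence of $p_L(\alpha,d,\eta)$ follows from monotonicity of the event of existence of an open Lipschitz surface; nontriviality will come out of the quantitative bounds below. Symmetry under coordinate permutations and sign flips of coordinates where $\eta_i\neq 0$ preserves the product measure $\P_p$ and maps tilted planes of common $\|\eta\|_1$ to one another, so $p_L$ depends on $\eta$ only through $\|\eta\|_1$. The inequality $p_L^{\mathrm{pyr}}\geq p_L^{\mathrm{plane}}$ is immediate since the pyramid dominates every tilted plane appearing in its defining maximum. For the converse, whenever $p>p_L^{\mathrm{plane}}$ the minimal open Lipschitz surface $F_{\eta'}$ above $L^{\alpha,d}_{\eta'}$ exists and is a.s.\ finite for each $\eta'$ with $\|\eta'\|_1=\|\eta\|_1$ (by the above symmetry); the pointwise maximum over the finitely many such $\eta'$ is then an open Lipschitz function whose graph lies above $\nabla^{\alpha,d}_\eta$.

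\textbf{Asymptotics as $d\to\infty$.} I would adapt the minimal-surface / path-counting methodology of \cite{GrHo-12} to the tilted setting. The key geometric input is that the excess $G(\bar x):=F(\bar x)-\lfloor\alpha\sum_i\eta_i x_i\rfloor$ of the minimal open Lipschitz surface above the plane is non-negative and, when $\eta_i=+1$, satisfies $G(\bar x+e_i)-G(\bar x)\in[-1-\alpha,1-\alpha]$ modulo rounding, so its maximum rate of \emph{increase} along a tilt direction is only $1-\alpha$ (the Lipschitz slack $1$ minus the plane's slope $\alpha$). Consequently, any blocking configuration of closed sites forcing $F(\bar 0)\geq h$ must span horizontal extent $\gtrsim h/(1-\alpha)$ in the tilt directions. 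Counting Lipschitz paths of closed sites of this effective length would yield a tail estimate $\P_p(F(\bar 0)\geq h)\leq \psi(p,d,\alpha)^h$ with $\psi<1$ precisely when $(1-p)\cdot d^{1/(1-\alpha)}$ is below a constant threshold, producing $1-p_L\gtrsim d^{-1/(1-\alpha)}$. The matching bound $1-p_L\lesssim d^{-1/(1-\alpha)}$ I would obtain by explicitly constructing local blocking walls realizing the extremal geometry identified above, and applying first-moment / Borel--Cantelli arguments to show that they appear a.s.\ once $(1-p)\cdot d^{1/(1-\alpha)}$ exceeds a different constant, thereby precluding any open Lipschitz surface above the plane.

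\textbf{Asymptotics as $\alpha\to 1$ and main obstacle.} With $d$ fixed and $1-\alpha\to 0$, the Lipschitz cone becomes nearly parallel to the plane and the slack per lattice step along each tilted coordinate direction shrinks to $1-\alpha$. A parallel path-counting / blocking-construction analysis then produces the threshold $1-p_L\asymp(1-\alpha)^d$, with the exponent $d$ emerging from the number of independent coordinate degrees of freedom entering both the combinatorial path count and the explicit blocking construction. The principal technical obstacle throughout will be to make this heuristic geometry rigorous in the tilted setting: isolating the correct notion of minimal blocking configuration, obtaining a sharp combinatorial bound on the number of such configurations of given vertical extent, and producing matching explicit constructions of closed-site configurations that saturate the conjectured optimal geometry, so that the exponents land exactly at $1/(1-\alpha)$ and $d$ rather than at weaker polynomial or exponential dependencies.
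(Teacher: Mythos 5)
Your treatment of the soft part is essentially the paper's: symmetry gives the dependence on $\Vert\eta\Vert_1$ only, and the pointwise maximum of the (a.s.\ existing) minimal Lipschitz functions over all $\eta'$ with $\Vert\eta'\Vert_1=\Vert\eta\Vert_1$ handles the pyramid. One omission there: monotonicity in $p$ alone only yields a threshold for positivity of $\P_p(\mathsf{LIP}^{\alpha,d}_\eta)$; to get the dichotomy $0$ versus $1$ you also need a zero--one law, which the paper obtains from ergodicity of the vertical shift. Your first-moment count of admissible $\lambda$-paths for the existence direction (the lower bound $1-p_L\gtrsim d^{-1/(1-\alpha)}$) is also the paper's route, with the tilt entering exactly through the reduced number of closed sites per step in a tilt direction.

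The two quantitative halves you sketch for the non-existence and for the $\alpha\to 1$ existence bound contain genuine gaps. First, ``local blocking walls appearing a.s.\ by Borel--Cantelli'' cannot preclude an open Lipschitz surface: a Lipschitz function may simply be large over any finite closed configuration and descend elsewhere, so non-existence requires showing that the height at a \emph{fixed} site is a.s.\ infinite, i.e.\ constructing an \emph{infinite} chain of closed sites whose density along an oriented direction exceeds $1-\alpha$, so that the associated reversed admissible $\lambda$-path descends linearly below the tilted plane. The paper does this via a comparison with $\rho$-percolation for $\rho>1-\alpha$ (using the Kesten--Sidoravicius asymptotics $q_c(\rho,d)\sim R(\rho)d^{-1/\rho}$ together with continuity of $q_c$ in $\rho$) to get the sharp $d^{-1/(1-\alpha)}$ rate, and via a greedy renewal construction plus the law of large numbers (the condition $\E_p[T]<1/(1-\alpha)$) to get the $d!(1-\alpha)^d$ bound; your sketch supplies neither mechanism. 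Second, for the matching lower bound $1-p_L\gtrsim(1-\alpha)^d$ as $\alpha\to 1$, a ``parallel path-counting analysis'' fails: the direct first-moment bound over $\lambda$-paths gives only $q_L\geq \tfrac12(4d)^{-1/(1-\alpha)}$, which is far smaller than $(1-\alpha)^d$ as $\alpha\uparrow 1$, because exponentially many $\lambda$-paths share the same closed sites and the union bound becomes lossy. The paper's key idea here --- which your proposal does not identify --- is to coarse-grain $\Z^{d+1}$ into boxes of side $(1-\alpha)^{-1}$ along the tilted directions, pass to coarse-grained $\lambda$-paths, and apply an exponential Chebyshev bound to the number of closed sites their boxes contain; the exponent $k=\Vert\eta\Vert_1$ then comes from the box volume $\lceil(1-\alpha)^{-1}\rceil^{k}$, not merely from ``coordinate degrees of freedom.''
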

Here we write $f(s) \asymp g(s)$ as $ s\rightarrow \bar s$ for two functions $f$ and $g$
 if there exist positive and finite constants $c,C$ such that $\liminf_{s \rightarrow \bar s} f(s)/g(s) \geq c$ and $\limsup_{s \rightarrow \bar s} f(s)/g(s) \leq C$.

For the reader's convenience, Theorem \ref{thm:mainthm_intro} is a concise
summary of the principal asymptotics for $p_L(\alpha, d, \eta)$ obtained in this article. The actual asymptotics
 we obtain are more precise and will be given as individual results below.

The article is structured as follows.
Section \ref{sec:Lipschitz} is concerned with general results on Lipschitz percolation in the set-up of tilted planes.
 Proposition \ref{prop:equiv} establishes the non-trivial phase transition for $p_L(\alpha, d, \eta)$,
  whereas Lemma \ref{lem:mon} exposes the monotonicity relations for the individual parameters.

Section \ref{sec:bounds} outlines all bounds on the critical probabilities separated into two subsections, one for lower and one for upper bounds. Using the notation of \eqref{eq:qDef}, the asymptotics
\eqref{eq:mr_d} and \eqref{mr_alpha} follow by combining Propositions \ref{prop:allAlphaUBp} and \ref{prop:UBqAsympD}, 
as well as Propositions \ref{prop:lbq_alpha} and \ref{prop:UBqAsympAlpha}, respectively. As explained in Proposition \ref{prop:D1}, for $d=1$ we obtain the exact asymptic behavior for $\alpha \rightarrow 1$. In addition, Proposition \ref{prop:qLBkDepOnd} provides lower bounds for the critical probabilities, depending on how the number of tilted axes behaves asymptotically with the dimension. 

The corresponding proofs and further auxiliary results are contained in Section \ref{sec:proofs}.


\section{Further notation and auxiliary results} \label{sec:Lipschitz}

We begin by defining the events to be considered and to this end denote by $L^{\alpha, d}_{\eta, \geq}$ the upper half space above $L^{\alpha, d}_{\eta}$.
For this purpose, denote the set of all Lipschitz functions by $\Lambda$.

\begin{definition}
 Let $\mathsf{LIP}^{\alpha, d}_{\eta}$ denote the event that there exists an open Lipschitz surface 
contained in  $L^{\alpha, d}_{\eta,\ge}$, i.e.,
	\[\mathsf{LIP}^{\alpha, d}_{\eta} := 
	\Big \{ \omega \in \Omega \mid \exists F \in \Lambda : \forall \bar{x} \in \mathbb Z^{d}: \omega((\bar{x}, F(\bar{x})))=1 \text{ and } F(\bar{x}) > \Big \lfloor \alpha \sum_{i=1}^d \eta_i \bar{x}_i \Big \rfloor \Big \}.\]
Similarly to the case of planes we use $\mathsf{LIP}(\nabla^{\alpha, d}_{\eta})$ to denote the event of existence of a Lipschitz surface above the inverted pyramid $\nabla_\eta^{\alpha, d}$, i.e.,
\begin{align*}
&\mathsf{LIP}(\nabla^{\alpha, d}_{\eta}):= \\
& \Big \{\omega \in \Omega \mid \exists F \in \Lambda  :  \forall \bar{x} \in \mathbb Z^{d} :  \omega((\bar{x}, F(\bar{x})))=1 
	\text{ and } F(\bar{x}) >  
	\max_{\substack{\eta' \in \{-1,0,+1\}^d\\ \Vert \eta' \Vert_1 = \Vert \eta \Vert_1}} 
	 \Big \{ \Big\lfloor \alpha \sum_{i=1}^d \eta_i'  \bar x_i \Big \rfloor \Big \}   \Big\}.
	\end{align*}
\end{definition}

\begin{proposition}
 \label{prop:equiv}
For any $d\ge 1,$   $\alpha \in [0,1)$ and $\eta \in \{-1,0,+1\}^d$, there exists a critical probability 
 $p_L(\alpha,d, \eta) \in (0,1)$ 
such that
\begin{align}\label{eq:planePhaseTrans}
&  \P_p(\mathsf{LIP}(\nabla^{\alpha, d}_{\eta})) = \mathbb P_p(\mathsf{LIP}^{\alpha, d}_{\eta}) = \begin{cases}
	0, \;   p\in [0,p_L(\alpha,d, \eta)),\\
        1, \;  p \in (p_L(\alpha,d,\eta ),1].
        \end{cases}
\end{align}
In fact, for any $\eta' \in \{-1,0,1\}$ with $\Vert \eta \Vert_1 = \Vert \eta'\Vert_1$,
\begin{align} \label{eq:pNormDep}
p_L(\alpha,d, \eta) = p_L(\alpha,d, \eta' ).
\end{align}
Therefore, $p_L(\alpha,d, \eta)$ depends on $\eta$ only through the number of nonzero entries.
\end{proposition}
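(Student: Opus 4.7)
The plan splits into three stages: the phase transition (with nontriviality), the $\eta$-symmetry \eqref{eq:pNormDep}, and the plane-pyramid equivalence.

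\textbf{Phase transition.} Both events are monotone increasing in $\omega$, so a monotone coupling gives each a critical threshold in $[0,1]$. Both events are invariant under translations in the sublattice
$\Gamma_\eta := \{(\bar z, 0) \in \Z^{d+1} : \sum_{i=1}^d \eta_i z_i = 0\}$,
and the Bernoulli product measure $\P_p$ is ergodic under any translation subgroup containing an element of infinite order; since $\Gamma_\eta$ is infinite whenever $\eta = 0$ or $d \ge 2$, this yields a 0-1 law in those cases. The remaining case $d = 1$, $\eta \ne 0$ requires a separate tail-type argument: any candidate open Lipschitz surface can be locally raised to avoid a given finite set of sites while retaining Lipschitz continuity and openness elsewhere, so the existence event is effectively a tail event and Kolmogorov's 0-1 law applies.

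\textbf{Symmetry.} For $\eta, \eta' \in \{-1,0,+1\}^d$ with $\|\eta\|_1 = \|\eta'\|_1$, choose a signed permutation $\sigma$ of $\Z^d$ with $\sigma(\eta) = \eta'$ and extend by the identity in the last coordinate. Since $\P_p$ is invariant under permutations and sign flips of coordinates, the induced map on $\Omega$ is measure-preserving; since signed permutations are $\|\cdot\|_1$-isometries of $\Z^d$, they map the Lipschitz class $\Lambda$ to itself; and $\sigma(L^{\alpha,d}_\eta) = L^{\alpha,d}_{\eta'}$. This gives a measure-preserving bijection between $\mathsf{LIP}^{\alpha,d}_\eta$ and $\mathsf{LIP}^{\alpha,d}_{\eta'}$, proving \eqref{eq:pNormDep}.

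\textbf{Plane-pyramid equivalence.} The inclusion $\mathsf{LIP}(\nabla^{\alpha,d}_\eta) \subseteq \mathsf{LIP}^{\alpha,d}_\eta$ is immediate, since the pyramid pointwise dominates the $\eta$-plane. For the converse, fix $p$ strictly above the plane-threshold. By the symmetry step, $\P_p(\mathsf{LIP}^{\alpha,d}_{\eta'}) = 1$ for each of the finitely many $\eta'$ with $\|\eta'\|_1 = \|\eta\|_1$, so almost surely open Lipschitz surfaces $F_{\eta'}$ exist simultaneously for all such $\eta'$. Setting $F(\bar x) := \max_{\eta'} F_{\eta'}(\bar x)$, pointwise maxima of Lipschitz functions are Lipschitz; $F(\bar x)$ strictly exceeds each $\lfloor \alpha \sum_i \eta'_i \bar x_i \rfloor$ and hence the pyramid height; and at every $\bar x$ the maximum is attained by some $\eta^\ast(\bar x)$, so $F(\bar x) = F_{\eta^\ast(\bar x)}(\bar x)$ lies at an open site. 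Thus the pyramid event has probability $1$ above the plane-threshold, and the two critical probabilities coincide.

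Nontriviality $p_L \in (0, 1)$ follows quickly: the pyramid dominates the flat plane $\Z^d \times \{0\}$ pointwise (choose, for each $\bar x$, an $\eta'$ in the max whose nonzero entries align in sign with the corresponding $\bar x_i$), so the pyramid event is contained in standard untilted Lipschitz percolation, yielding $p_L \ge p_L(d) > 0$ from \cite{DiDoGrHoSc-10}; the upper bound $p_L < 1$ follows from a Peierls-type contour argument using that the tilt slope $\alpha < 1$ leaves room beneath the Lipschitz slope. The main obstacle is the 0-1 law: the interaction of the floor function with the tilt destroys shift-invariance along the tilt direction, forcing one to work with the orthogonal sublattice $\Gamma_\eta$ and to treat $d=1,\eta\ne 0$ separately.
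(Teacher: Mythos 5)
Your symmetry step and your plane--pyramid equivalence are essentially the paper's argument (signed-permutation invariance of $\P_p$, then intersecting the finitely many events $\mathsf{LIP}^{\alpha,d}_{\eta'}$ and taking the pointwise maximum of the Lipschitz functions), and your comparison of the pyramid with the flat plane for $p_L>0$ is a valid variant of the paper's lower bound. The genuine gap is in your zero--one law. Your horizontal sublattice $\Gamma_\eta$ works when it is nontrivial, but in the case $d=1$, $\eta\neq 0$ your fallback is wrong as stated: the event $\mathsf{LIP}^{\alpha,d}_{\eta}$ is \emph{not} a tail event. If $\omega$ has exactly one open Lipschitz surface and every site off that surface is closed, then closing a single site of the surface destroys the event, and "locally raising" the surface is impossible because the raised portion must itself consist of open sites, which nothing guarantees. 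At best the event is a.s.\ equal to a tail event, but establishing that requires a separate probabilistic argument which you do not supply, and Kolmogorov's $0$--$1$ law does not apply directly.

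The paper avoids this entirely with a single uniform argument: take the \emph{vertical} shift $\theta:\omega\mapsto\omega(\cdot,\ldots,\cdot+1)$. The event need not be invariant under $\theta$; one only needs $\theta^{-1}(\mathsf{LIP}^{\alpha,d}_{\eta})\subseteq \mathsf{LIP}^{\alpha,d}_{\eta}$ together with $\P_p(\theta^{-1}(\mathsf{LIP}^{\alpha,d}_{\eta}))=\P_p(\mathsf{LIP}^{\alpha,d}_{\eta})$ to conclude that the symmetric difference is null, whence a.s.\ invariance and, by ergodicity of $\theta$, probability $0$ or $1$. This covers all $d\ge 1$ and all $\eta$ at once, so no case split is needed; I suggest you replace your $\Gamma_\eta$ construction and the $d=1$ patch by this. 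A secondary, smaller issue: your claim that $p_L<1$ "follows from a Peierls-type contour argument" is only a gesture; the paper makes this precise by deferring to the quantitative $\lambda$-path estimate of Proposition \ref{prop:allAlphaUBp} combined with the monotonicity \eqref{eq:etaMon}, and you should either carry out the contour/path count or cite such a bound explicitly.
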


This means that there exists a phase transition for both Lipschitz percolation above tilted planes and above inverted pyramids, and their critical probabilities coincide.
Due to \eqref{eq:pNormDep} it is convenient to define $p_L(\alpha, d,k) := p_L(\alpha,d,\eta)$ for any $\eta \in \{-1,0,+1\}$ such that $\Vert \eta \Vert_1 = k\in\{0,\ldots,d\}$. Furthermore, we set
\begin{equation} \label{eq:qDef}
q_L(\alpha, d,k) := 1-p_L(\alpha, d,k).
\end{equation}
For notational convenience we will formulate most of our results for $q_L$ instead of $p_L$ since the latter usually tends to $1$ and hence the former to $0$.

\begin{proof}[Proof of Proposition \ref{prop:equiv}]
First observe that due to the symmetries of $\Z^d$ and the i.i.d.-product structure of $\P_p$,  the quantity $\P_p(\mathsf{LIP}^{\alpha, d}_{\eta})$ depends on $\eta$ only through  $\Vert \eta \Vert_1$. Thus, if the postulated critical probabilities exist,
then they must fulfill \eqref{eq:pNormDep}.

We now start with showing the second equality in  \eqref{eq:planePhaseTrans} for some $p_L(\alpha,d, \eta) \in [0,1]$.
Since $\mathsf{LIP}^{\alpha, d}_{\eta}$  is an 
increasing event,  it is immediate that $\mathbb P_p(\mathsf{LIP}^{\alpha, d}_{\eta})$ is nondecreasing in $p.$
Therefore, it is sufficient to show 
 that it takes values in $\{0,1\}$ only.

Define the shift  $\theta: \omega \mapsto \omega(\cdot, \ldots, \cdot +1)$ 
in the $(d+1)$-st coordinate. Then $\theta$ is measure preserving for $\P_p$ and ergodic with respect to $\P_p.$ As a consequence, since $\theta^{-1}(\mathsf{LIP}^{\alpha, d}_{\eta}) \subset \mathsf{LIP}^{\alpha, d}_{\eta}$ and $\P_p(\theta^{-1}(\mathsf{LIP}^{\alpha, d}_{\eta}))=\P_p(\mathsf{LIP}^{\alpha, d}_{\eta})$, the event $\mathsf{LIP}^{\alpha, d}_{\eta}$ is $\P_p$-a.s. invariant with respect to $\theta$, i.e. $\P_p(\mathsf{LIP}^{\alpha, d}_{\eta} \triangle \theta^{-1}(\mathsf{LIP}^{\alpha, d}_{\eta}))=0,$ and by Proposition 6.15 in \cite{Breiman} this already implies 
\begin{align*}
 \P_p (  \mathsf{LIP}^{\alpha, d}_{\eta} )\in \{0,1\}.
\end{align*}
This establishes the second equality in  \eqref{eq:planePhaseTrans} for some $p_L(\alpha,d, \eta) \in [0,1]$.

In order to obtain the first equality of \eqref{eq:planePhaseTrans}, due to the 
second
equality in \eqref{eq:planePhaseTrans} and
$\mathsf{LIP}(\nabla^{\alpha, d}_{\eta}) \subseteq  \mathsf{LIP}^{\alpha, d}_{\eta},
$
it remains to show
that
$
 \mathbb P_p(\mathsf{LIP}^{\alpha, d}_{\eta}) = 1$ implies
$\P_p(\mathsf{LIP}(\nabla^{\alpha, d}_{\eta})) = 1.$
By symmetries, $ \mathbb P_p(\mathsf{LIP}^{\alpha, d}_{\eta}) = 1$
 already yields
\begin{equation*} 
\P_p \Big( \bigcap_{\substack{\eta' \in \{-1,0,+1\}^d\\ \Vert \eta' \Vert_1 = \Vert \eta \Vert_1}} \mathsf{LIP}^{\alpha, d}_{\eta'} \Big) = 1.
\end{equation*}
Note that the pointwise maximum of Lipschitz functions is a Lipschitz function again and thus 
 \begin{align*}
  \bigcap_{\substack{\eta' \in \{-1,0,+1\}^d\\ \Vert \eta' \Vert_1 = \Vert \eta \Vert_1}} \mathsf{LIP}^{\alpha, d}_{\eta'}  \subseteq \mathsf{LIP}(\nabla^{\alpha, d}_{\eta}).
 \end{align*}
 Thus \eqref{eq:planePhaseTrans} holds true.
 
 It remains to show the nontriviality of the phase transition, i.e., that $p_L(\alpha,d, \eta) \in (0,1).$
Proposition \ref{prop:allAlphaUBp} below in particular shows that $p_L(\alpha, d, d) < 1$ for all $\alpha \in [0,1)$ and $d \ge 1$;  hence, using \eqref{eq:etaMon} below, we deduce $p_L(\alpha, d, k) < 1$ for all $0 \le k \le d.$ On the other hand, $p_L(\alpha, d, k) > 0$ for all $0 \le k \le d$ follows from the fact that the critical probability for the existence of an infinite connected component in the $1$-norm
in $(d+1)$-dimensional Bernoulli site-percolation (which is a lower bound for $p_L(\alpha, d, k)$) is strictly positive.
\end{proof}

Using the above result one can obtain  some simple but helpful monotonicity results for the critical probabilities. 

\begin{lemma} \label{lem:mon}
 For all $d \in \mathbb N,$ and $\alpha, \alpha' \in [0,1)$ such that $\alpha \leq \alpha'$, we have 
\begin{equation} 
\forall\, k = 0, \ldots, d: \quad p_L({\alpha},d, k) \leq p_L({ \alpha'}, d, k), \label{eq:alphaMon}
\end{equation}
\begin{equation}
 \forall\, k = 0, \ldots, d:\quad p_L({\alpha}, d, k)
 \leq p_L({\alpha}, d+1, k), \label{eq:dMon}
 \end{equation}
 and
 \begin{equation}
\forall\, k = 0, \ldots, d-1:\quad p_L({\alpha},d, k) \leq p_L({\alpha}, d, k+1).  \label{eq:etaMon} 
\end{equation}
\end{lemma}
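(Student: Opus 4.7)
}
The plan is to leverage Proposition \ref{prop:equiv}, which identifies $p_L(\alpha,d,k)$ with the critical probability for Lipschitz percolation above the inverted pyramid $\nabla^{\alpha,d}_\eta$. The tilted plane $L^{\alpha,d}_\eta$ itself is not pointwise monotone in $\alpha$ or in the support of $\eta$ (its height changes sign as $\sum \eta_i x_i$ changes sign), but the pyramid is: for $\|\eta\|_1=k\ge 1$, the quantity
\[
M_k(\bar x):=\max_{\|\eta'\|_1=k}\sum_{i=1}^d \eta'_i\bar x_i
\]
is manifestly non-negative (attained by placing $\pm 1$'s on the $k$ largest $|\bar x_i|$ with the correct sign). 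Using the trivial identity $\max_{\eta'}\lfloor \alpha\sum \eta'_i\bar x_i\rfloor=\lfloor \alpha M_k(\bar x)\rfloor$ (which holds because $\lfloor\cdot\rfloor$ is non-decreasing), the pyramid height at $\bar x$ is exactly $\lfloor\alpha M_k(\bar x)\rfloor$, and this is the object whose monotonicities I will compare. For $k=0$ the pyramid is simply the plane $\Z^d\times\{0\}$ and all inequalities in the Lemma reduce to the known fact that standard Lipschitz percolation becomes no easier in higher dimensions.

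\paragraph{Proof of (a) and (c).} For (a), since $M_k(\bar x)\ge 0$ and $\alpha\mapsto \lfloor\alpha y\rfloor$ is non-decreasing in $\alpha$ for $y\ge 0$, we get $\lfloor\alpha M_k(\bar x)\rfloor\le \lfloor\alpha' M_k(\bar x)\rfloor$ for all $\bar x\in\Z^d$. Hence $\nabla^{\alpha',d}_\eta$ lies (weakly) above $\nabla^{\alpha,d}_\eta$ pointwise, and any Lipschitz surface strictly above the higher pyramid is also strictly above the lower one; that is,
\[
\mathsf{LIP}(\nabla^{\alpha',d}_\eta)\subseteq \mathsf{LIP}(\nabla^{\alpha,d}_\eta),
\]
which by Proposition \ref{prop:equiv} gives $p_L(\alpha,d,k)\le p_L(\alpha',d,k)$. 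For (c), I compare $M_k(\bar x)$ and $M_{k+1}(\bar x)$: given any $\eta^*$ with $\|\eta^*\|_1=k$, choose $j\notin\operatorname{supp}(\eta^*)$ (possible since $k\le d-1$) and extend to $\eta^{**}$ by setting $\eta^{**}_j=\operatorname{sign}(\bar x_j)$ (or $+1$ if $\bar x_j=0$). Then $\sum\eta^{**}_i\bar x_i=\sum\eta^*_i\bar x_i+|\bar x_j|\ge \sum\eta^*_i\bar x_i$, so $M_{k+1}(\bar x)\ge M_k(\bar x)$, and the same argument as in (a) shows $\mathsf{LIP}(\nabla^{\alpha,d}_{\eta'})\subseteq \mathsf{LIP}(\nabla^{\alpha,d}_\eta)$ whenever $\|\eta'\|_1=k+1$ and $\|\eta\|_1=k$, yielding (c).

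\paragraph{Proof of (b).} Here I use the plane formulation and a hyperplane restriction. By \eqref{eq:pNormDep} I may pick, at dimension $d+1$, the vector $\eta=(\eta'_1,\dots,\eta'_d,0)\in\{-1,0,+1\}^{d+1}$ where $\eta'\in\{-1,0,+1\}^d$ has $\|\eta'\|_1=k$. If a configuration $\omega'\in\{0,1\}^{\Z^{d+2}}$ realises the event $\mathsf{LIP}^{\alpha,d+1}_\eta$ via a Lipschitz $F:\Z^{d+1}\to\Z$, then $G(\bar x):=F(\bar x,0)$, for $\bar x\in\Z^d$, is Lipschitz and satisfies
\[
G(\bar x)=F(\bar x,0)>\Big\lfloor \alpha\sum_{i=1}^{d+1}\eta_i(\bar x,0)_i\Big\rfloor=\Big\lfloor \alpha\sum_{i=1}^d\eta'_i\bar x_i\Big\rfloor,
\]
because $\eta_{d+1}=0$ kills the contribution of the extra coordinate. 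Moreover, $G$ is a surface open in the restricted configuration $\omega:=\omega'(\,\cdot\,,0,\,\cdot\,)\in\{0,1\}^{\Z^{d+1}}$, and under $\P_p$ this restriction is again i.i.d.\ Bernoulli$(p)$. Thus $\{\omega'\in\mathsf{LIP}^{\alpha,d+1}_\eta\}\subseteq\{\omega\in\mathsf{LIP}^{\alpha,d}_{\eta'}\}$ as a coupling, so $\P_p(\mathsf{LIP}^{\alpha,d+1}_\eta)\le \P_p(\mathsf{LIP}^{\alpha,d}_{\eta'})$, and the zero–one law of Proposition \ref{prop:equiv} converts this into $p_L(\alpha,d,k)\le p_L(\alpha,d+1,k)$. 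No step presents a genuine obstacle; the only mildly delicate point is recognising that (a) and (c) are not transparent from the plane picture and do require passing to the inverted pyramid.
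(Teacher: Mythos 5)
Your proof is correct and follows essentially the same route as the paper: monotonicity of the inverted pyramid for \eqref{eq:alphaMon} and \eqref{eq:etaMon}, and a hyperplane cross-section (restriction of the configuration to a coordinate slice, which is again i.i.d.\ Bernoulli) for \eqref{eq:dMon}. Your explicit observation that the pyramid height equals $\lfloor\alpha M_k(\bar x)\rfloor$ with $M_k(\bar x)\ge 0$ cleanly justifies the pointwise comparison of pyramids that the paper asserts without comment.
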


\begin{proof}
We start by proving the monotonicity in $\alpha$, which is best seen considering Lipschitz surfaces above inverted pyramids. Note that for $\alpha' \geq \alpha$, one has $ \nabla^{\alpha',d}_\eta \geq \nabla^{\alpha, d}_\eta,$ in the sense that for any $(\bar y, y_{d+1}^{\alpha'}) \in \nabla^{ \alpha',d}_\eta$ and $(\bar y, y_{d+1}^{\alpha}) \in \nabla^{\alpha,d}_\eta$ we have $y^{ \alpha'}_{d+1} \geq y^{\alpha}_{d+1}$. Hence $\mathsf{LIP}(\nabla^{\alpha', d}_{\eta}) \subseteq
\mathsf{LIP}(\nabla^{\alpha, d}_{\eta}),$ which implies \eqref{eq:alphaMon}.

On the other hand, to prove \eqref{eq:dMon} choose $\eta \in \{-1,0,+1\}^{d+1}$ with $\Vert \eta \Vert_1 = k,$ and let $1 \le j \le
d+1$ be such that $\eta_j = 0.$
Then \eqref{eq:dMon}
 follows directly from the fact that the cross section of a Lipschitz surface in $L^{\alpha, d+1}_{\eta, \ge}$ with $\Z^{j-1} \times
\{0\}\times\mathbb Z^{d-j+1}$ mapped to $\Z^d$ by eliminating the $j$-th coordinate
is again a Lipschitz surface contained in  $L^{\alpha, d}_{\eta^{(j)}, \ge}$, for $\eta^{(j)}:=(\eta_1, \ldots, \eta_{j-1},\eta_{j+1}, \ldots, \eta_{d+1})$, combined with the fact that $\Vert \eta^{(j)} \Vert_1 = k$ and \eqref{eq:pNormDep}.

Lastly, \eqref{eq:etaMon} follows from the fact that for any $1\le j \le d,$ $ \nabla^{\alpha, d}_{\eta_{j \to 0}} \geq \nabla^{\alpha, d}_\eta$ in the above sense and thus $\mathsf{LIP}(\nabla^{\alpha, d}_{\eta_{j \to 0}}) \supset \mathsf{LIP}(\nabla^{\alpha, d}_{\eta})$,
where $\eta_{j \to 0}$ is obtained from $\eta$ by replacing the $j$-th coordinate by $0.$
\end{proof}


\section{Bounds on the Critical Probabilities} \label{sec:bounds}

For functions $f,g$ we write $f(s) \lesssim g(s)$ as $s\rightarrow \bar s$, if $\limsup_{s \rightarrow \bar s} f(s)/g(s) \leq 1$, 
we write $f(s) \gtrsim g(s)$ as $s\rightarrow \bar s$, if $\liminf_{s \rightarrow \bar s} f(s)/g(s) \geq 1$, and
asymptotic equivalence is denoted by
 $f(s) \sim g(s), \,s\rightarrow \bar s$ (i.e., if $f(s) \lesssim g(s)$ and $f(s) \gtrsim g(s)$ as $s\rightarrow \bar s$). With this notation we can write the results on the bounds in \cite{GrHo-12} as
\begin{align} \label{eq:GH12}
\begin{split}
 q_L(0,d,0) &\geq (8d)^{-1}, \quad \text{ for all } d \in \N,\\
 q_L(0,d,0) &\lesssim (2d)^{-1}, \quad \text{ as } d\rightarrow \infty.
 \end{split}
\end{align}

\subsection{Lower Bounds for $q_L(\alpha, d, k)$ } 


\begin{proposition}[General bound]\label{prop:allAlphaUBp}
 For any $d \geq 1$ and $\alpha \in [0,1)$ one has
 \begin{align*}
  q_L(\alpha, d, d) \geq \frac{1}{2} (4d)^{-\frac{1}{1-\alpha}}.
 \end{align*}
\end{proposition}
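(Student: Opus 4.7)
The plan is to prove the equivalent statement: if $q := 1-p < \tfrac{1}{2}(4d)^{-1/(1-\alpha)}$, then $\P_p(\mathsf{LIP}^{\alpha, d}_\eta) = 1$ whenever $\|\eta\|_1 = d$. By Proposition~\ref{prop:equiv} I may fix $\eta = (1,\ldots,1)$. The strategy follows the Peierls/admissible-path argument used to establish the untilted bound $q_L(0,d,0) \geq (8d)^{-1}$ in \cite{GrHo-12}, extended to the tilted geometry. Note also that the present bound reduces to $(8d)^{-1}$ at $\alpha=0$, which gives a useful sanity check on the constants.

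The first step is a duality: the non-existence of an open Lipschitz surface above $L^{\alpha, d}_\eta$ must be witnessed by an infinite sequence of closed sites (a ``closed bridge'') emerging from just above the plane, with consecutive sites adjacent in a lattice sense chosen to mesh with the Lipschitz constraint (roughly, $\ell_\infty$-neighbors in $\Z^{d+1}$ with a monotonicity condition on the vertical coordinate). This duality is the standard output of the contour analysis of the minimal open Lipschitz surface (declared $+\infty$ on columns where none exists). I expect the argument of the untilted case to carry over to the tilted one with only cosmetic modifications, since the contour analysis is driven by the Lipschitz condition and is largely indifferent to where the constraint surface sits.

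The main work is the combinatorial estimate: bounding the expected number of admissible bridges of length $n$ emerging from a fixed column. The tilt enters here through the requirement that the bridge stay above $L^{\alpha, d}_\eta$: a bridge traveling horizontally in the $\eta$-direction must climb at rate $\alpha$ per step. My goal is to show the expected count of bridges of length $n$ is at most $\bigl( 2(4d)^{1/(1-\alpha)} q \bigr)^n$. Under the hypothesis on $q$, this is bounded by $2^{-n}$ and hence summable; a Borel--Cantelli argument then rules out infinite bridges almost surely and produces the desired surface. Repeating the estimate near every column (and noting that only the event of an infinite bridge somewhere is needed) then completes the proof.

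The step I expect to be the main obstacle is obtaining the correct exponent $1/(1-\alpha)$ in the combinatorial bound. One has to encode the tilt constraint into the bridge definition and show that the effective branching per closed site is of order $(4d)^{1/(1-\alpha)}$: this is the quantitative reflection of the fact that the Lipschitz slope $1$ outstrips the plane slope $\alpha$ only by $1-\alpha$, so each unit of ``useful'' horizontal progress consumes $\sim 1/(1-\alpha)$ closed sites along the bridge. Extracting this scaling cleanly, while simultaneously controlling the base $4d$ and the prefactor $\tfrac{1}{2}$, requires careful bookkeeping of the permissible moves at each step, possibly by separately accounting for moves that advance along $\eta$ (expensive, because they force vertical climb) and those transverse to $\eta$ (cheap).
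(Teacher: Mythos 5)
Your overall strategy coincides with the paper's: the paper constructs the lowest open Lipschitz surface from admissible $\lambda$-paths and reduces existence to showing that the expected number of sites at relative height $h$ reachable from the tilted plane tends to $0$ (Lemma \ref{lem:surfaceExistence}), and then runs exactly the first-moment count you describe (Lemma \ref{lem:LBwithPi}). Your identification of the effective branching $(4d)^{1/(1-\alpha)}$ per closed site, and the sanity check at $\alpha=0$, are both on target. The problem is that the decisive combinatorial estimate --- which is the entire content of the proposition --- is asserted rather than carried out, and the form in which you assert it does not hold as stated. If ``length $n$'' counts all steps of the bridge, the bound $\bigl(2(4d)^{1/(1-\alpha)}q\bigr)^n$ cannot be correct, because only the upward steps (those forced onto closed sites) contribute a factor of $q$. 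If instead $n$ counts closed sites, the entropy between two consecutive closed sites is not uniformly bounded by $2(4d)^{1/(1-\alpha)}$: a $\lambda$-path may interpolate arbitrarily many downward-diagonal steps between consecutive upward steps, and the fact that these must eventually be compensated by further closed sites is a global constraint, not a local one. The paper resolves this by summing over the joint step counts $(U,D^+,D^-)$ subject to the exact relation $U=D^++D^-+\lfloor\alpha(D^+-D^-)\rfloor+h$ with $D^+\le D^-$, and by absorbing the path-counting multinomial via the weighting $\binom{U+D^++D^-}{U,D^+,D^-}\,p_1^{U}p_2^{D^+}p_3^{D^-}\le 1$; the resulting geometric series converge precisely when $q<p_1(p_3/d)^{1/(1-\alpha)}$, and the choice $p_1=\tfrac12$, $p_2=p_3\uparrow\tfrac14$ is where the constants $\tfrac12$ and $4$ come from. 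Without this (or an equivalent) reweighting device, ``careful bookkeeping'' has no visible route to the stated prefactors.

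A smaller point: your heuristic sentence is inverted. A downward-diagonal step toward the negative orthant loses only $1-\alpha$ in height relative to the tilted plane, so each unit of useful horizontal progress consumes $1-\alpha$ closed sites --- equivalently, one closed site pays for $\sim 1/(1-\alpha)$ horizontal steps. Your stated conclusion, branching $\sim(4d)^{1/(1-\alpha)}$ per closed site, is nevertheless the correct one, so this reads as a slip rather than a conceptual error.
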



Note that for $\alpha =0$ this is exactly the lower bound of \eqref{eq:GH12}.
In a similar way one can find bounds for the critical probability in the case that the number $k$ of axes along which the plane is tilted depends on the dimension $d$:
\begin{proposition} \label{prop:qLBkDepOnd} 
 Consider a function $\varphi : \N \to \N_0$ with $\varphi(d) \le d$ for all $d \in \N$. 
\begin{enumerate}
 \item If for some $\alpha \in [0,1)$ one has that $\varphi (d) \in o(d^{1-\alpha})$ as $d\rightarrow \infty$, then
\begin{align*}
 q_L(\alpha,d,\varphi(d))  \gtrsim\frac{1}{8} d^{-1}, \quad \text{ as }d\rightarrow \infty.
\end{align*}
 \item If for some
 $\alpha \in [0,1)$ and $c \in [0,1]$ one has $\varphi(d) \sim cd^{1-\alpha}$ as $d\rightarrow \infty$, 
 then there exists a constant $C(c,\alpha)>0$ such that 
\begin{align*}
  q_L(\alpha,d,\varphi(d))  \gtrsim C(c,\alpha) d^{-1}, \quad \text{ as }d\rightarrow \infty.
\end{align*}
 \item If for some
  $c \in (0,1]$ one has
 $\varphi(d) \sim cd$ as $d \rightarrow \infty$, then for $\alpha \in (0,1)$,
\begin{align*}
 q_L(\alpha, d, \varphi(d)) \gtrsim \frac{1}{4} (1-\alpha)(cd)^{-\frac{1}{1-\alpha}}, \quad \text{ as }d\rightarrow \infty.
\end{align*}
\end{enumerate}
\end{proposition}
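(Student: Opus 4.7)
The plan is to prove all three lower bounds by a Peierls-type path-counting argument extending the one behind Proposition \ref{prop:allAlphaUBp}. By Proposition \ref{prop:equiv} it suffices, for each $q$ strictly below the claimed threshold, to show $\P_{1-q}(\mathsf{LIP}(\nabla^{\alpha,d}_\eta))>0$, which by ergodicity upgrades to probability one and yields $q_L(\alpha,d,\varphi(d))\ge q$. Using monotonicity \eqref{eq:etaMon} together with the symmetry \eqref{eq:pNormDep}, we reduce throughout to $\eta=(1,\ldots,1,0,\ldots,0)$ with exactly $k=\varphi(d)$ ones.

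The heart of the argument bounds the expected number of closed "blocking" paths that would witness the absence of a Lipschitz surface above $\nabla^{\alpha,d}_\eta$. A length-$n$ path carries weight $q^n$. A step along one of the $d-k$ non-tilted axes contributes an ordinary per-step weight of order $q$, while a step along one of the $k$ tilted axes, after coarse-graining consecutive lateral moves into blocks large enough for the floor $\lfloor\alpha\sum\eta_ix_i\rfloor$ to force an integer climb, contributes an effective weight of order $q^{1-\alpha}$. The exponent $1-\alpha$ is pinned by Proposition \ref{prop:allAlphaUBp}: in the fully tilted case $k=d$ the threshold $q\lesssim d^{-1/(1-\alpha)}$ is the rearrangement of $b\,d\,q^{1-\alpha}\lesssim 1$. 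Thus the resulting generating function is geometric with ratio
\[
\Phi_{d,k,\alpha}(q)\;:=\;a(d-k)\,q\;+\;b\,k\,q^{1-\alpha},
\]
for suitable absolute constants $a,b>0$, and existence of the Lipschitz surface reduces to $\Phi_{d,\varphi(d),\alpha}(q)<1$.

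Each of the three parts is then a direct asymptotic analysis of this inequality. In (i), $\varphi(d)=o(d^{1-\alpha})$ makes the tilted term $o(1)$ at $q\sim 1/d$, leaving the flat-case threshold $a\,dq<1$, which yields $q_L\gtrsim(8d)^{-1}$ as in \eqref{eq:GH12}. In (iii), $\varphi(d)\sim cd$ makes the flat term $o(1)$ at $q\sim d^{-1/(1-\alpha)}$, leaving $bcd\,q^{1-\alpha}<1$, which rearranges to the announced $\tfrac14(1-\alpha)(cd)^{-1/(1-\alpha)}$ once the constant $b$ is tracked from Proposition \ref{prop:allAlphaUBp}. In the critical case (ii), $\varphi(d)\sim cd^{1-\alpha}$ puts both terms of order $1$ at $q\sim 1/d$; writing $q=\xi/d$ the limit equation $a\xi+bc\,\xi^{1-\alpha}=1$ has a unique positive solution $\xi=1/C(c,\alpha)$, delivering the claimed constant.

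The principal obstacle is the bookkeeping needed to produce $\Phi_{d,k,\alpha}$ with constants sharp enough to read off the announced prefactors in each regime---particularly the factor $\tfrac14(1-\alpha)$ in (iii) and the explicit $C(c,\alpha)$ in (ii). Both depend on the block-size $\lceil 1/\alpha\rceil$ chosen when coarse-graining consecutive tilted-axis steps and on how the floor in the definition of $\nabla^{\alpha,d}_\eta$ is handled; these must be managed uniformly enough that $b$ does not depend on the regime. In (ii) one additionally has to verify that the lower-order corrections $d-\varphi(d)=d(1+o(1))$ and $\varphi(d)=cd^{1-\alpha}(1+o(1))$ do not disturb the limiting equation, so that $C(c,\alpha)$ emerges as a genuinely positive constant.
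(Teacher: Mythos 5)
Your overall strategy---a first-moment count of admissible blocking paths whose convergence threshold is a competition between a flat term of order $(d-k)q$ and a tilted term of order $k\,q^{1-\alpha}$, followed by regime-by-regime asymptotics---is exactly the paper's strategy (Lemma \ref{lem:LBwithPi} combined with Lemma \ref{lem:surfaceExistence}). However, the step you defer, namely producing the generating function $\Phi_{d,k,\alpha}$ with the stated prefactors, is the entire content of the proof, and the route you sketch would not deliver those prefactors. The paper does not coarse-grain here at all (coarse-graining appears only in the proof of Proposition \ref{prop:lbq_alpha}); the effective cost $q^{1-\alpha}$ per tilted step comes from the exact identity $U = D^+ + D^- + D + \lfloor \alpha(D^+-D^-)\rfloor + h$ for the number of upward (hence closed) sites on a $\lambda$-path ending at relative height $h$, together with restricting to the negative orthant so that $D^+-D^-\le 0$. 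A block argument with block size $\lceil 1/\alpha\rceil$ loses constants exactly where you need them sharp, and assigning a uniform cost $q^{1-\alpha}$ to every tilted step misses that positive tilted steps cost $q^{1+\alpha}$; the resulting paired contribution of order $k^2q^2$ is the first term in \eqref{eq:LBqWithp_i}, which your two-term $\Phi$ omits and which is genuinely needed in part (b) when $\alpha=0$.

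More fundamentally, there cannot be a single $\Phi$ with regime-independent constants $a,b$ realizing both the prefactor $1/8$ in (a) and $\tfrac14(1-\alpha)$ in (c). In the paper's parametrization the threshold is the minimum in \eqref{eq:LBqWithp_i} over free weights $p_1+p_2+p_3+p_4=1$: obtaining $a=8$ forces $p_1p_4=1/4$, hence $p_1=p_4=1/2$ and $p_2=p_3=0$, which destroys the tilted term; conversely $\tfrac14(1-\alpha)$ in (c) requires $p_1=\tfrac{1-\alpha}{2-\alpha}$, $p_3=\tfrac{1}{2-\alpha}$ and $p_4\downarrow 0$. The proof therefore optimizes the weights separately in each regime and sends $\varepsilon\to0$ only after $d\to\infty$, using that the non-dominant terms are $o(1)$ for any fixed feasible choice. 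Note also that ``tracking $b$ from Proposition \ref{prop:allAlphaUBp}'' would only reproduce that proposition's prefactor $\tfrac12 4^{-1/(1-\alpha)}$, not the stronger $\tfrac14(1-\alpha)$ claimed in (c). You correctly identify the constant-tracking as the principal obstacle, but as written the obstacle is not overcome, so the proof is incomplete precisely at the quantitative claims being asserted.
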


\begin{remark}
 The constant in Proposition \ref{prop:qLBkDepOnd}, (b), satisfies $C(c,0) = C(0,\alpha)= 1/8$ for any $c \in [0,1]$, $\alpha \in [0,1)$;
 this is what one would hope for, given that these cases correspond to standard Lipschitz percolation.
 
 The bound in Proposition \ref{prop:qLBkDepOnd}, (c), is an improvement compared to Proposition \ref{prop:allAlphaUBp} at the expense of being of asymptotic nature only. 
\end{remark}


\begin{proposition}[]\label{prop:lbq_alpha}
 For each $d \geq 1$ and each $k = 1,\ldots,d$ there exists a constant $C(k,d)>0$ such that for all $\alpha \in [0,1)$ one has
 \begin{align*}
  q_L(\alpha,d,k) \geq C(k,d)(1-\alpha)^k.
 \end{align*}
 \end{proposition}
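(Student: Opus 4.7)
The plan is to construct an open Lipschitz surface above $L^{\alpha,d}_\eta$ whenever $q := 1 - p \le C(k,d)(1-\alpha)^k$ for a suitable constant $C(k,d) > 0$. By Proposition \ref{prop:equiv} it suffices to exhibit such a surface with positive probability, and by \eqref{eq:pNormDep} I may take $\eta = (\underbrace{1,\ldots,1}_{k}, 0, \ldots, 0)$; set $s(\bar x) := \sum_{i=1}^{k} x_i$.

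The key deterministic observation is that $F^{*}(\bar x) := \lfloor \alpha s(\bar x) \rfloor + 1$ is itself a Lipschitz function strictly above $L^{\alpha,d}_\eta$. Indeed, for any neighbour $\bar y = \bar x \pm e_j$ one has $s(\bar y) - s(\bar x) \in \{-1, 0, 1\}$, and since $\alpha \in [0,1)$, the increments of $\lfloor \alpha s \rfloor$ lie in $\{-1,0,1\}$ as well. Thus an admissible Lipschitz surface always exists in $\mathbb Z^{d+1}$; the only obstruction is that its sites be open with respect to $\omega$.

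My proposal is a block coarse-graining argument at the scale $L \asymp (1-\alpha)^{-1}$. I would partition $\mathbb Z^d$ into cubic blocks of side length $L$; within each block $B$ the function $F^*$ rises by at most $\lfloor \alpha k L \rfloor \le kL - 1$, while a general Lipschitz function on $B$ may rise by up to $kL$, yielding one unit of vertical slack per block. Declare $B$ to be \emph{good} if within a vertical window of height of order $L$ above $F^*|_B$ there exists an open Lipschitz patch that matches prescribed boundary heights along $\partial B$. A careful local counting yields a bound of the form $1 - (Lq)^{\Theta(L^k)}$ on the good-probability; with $L \asymp (1-\alpha)^{-1}$ and $q \le C(k,d)(1-\alpha)^k$, this probability is arbitrarily close to $1$. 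A standard stochastic domination result (e.g.\ Liggett--Schonmann--Stacey) then produces a highly supercritical percolation of good blocks, and Lipschitz surgery along compatible boundary data assembles the global open Lipschitz surface.

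I expect the main technical obstacle to be the cross-block compatibility of the patches: specifying common boundary heights along $\partial B$ must be done delicately --- too rigid and the good-probability collapses, too loose and the local patches cannot be Lipschitz-glued across faces. Balancing these two effects determines the precise constant $C(k,d)$, which will depend polynomially on $k$ and $d$ via the combinatorial counts but will be independent of $\alpha$. An alternative (arguably cleaner) route to the same factor $(1-\alpha)^k$ is a direct Peierls-type count of obstructing configurations of closed sites, where the $k$-dimensional staircase structure of $F^*$ furnishes the required factor at each level of the contour enumeration.
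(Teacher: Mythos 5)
Your proposal correctly identifies the two essential ingredients — the coarse-graining scale $L\asymp(1-\alpha)^{-1}$ and the fact that the factor $(1-\alpha)^k$ should come from boxes containing $\asymp(1-\alpha)^{-k}$ sites in the $k$ tilted directions — but the argument has a genuine gap at its core: the estimate ``a careful local counting yields a bound of the form $1-(Lq)^{\Theta(L^k)}$ on the good-probability'' is precisely the hard step, and it is asserted rather than proved. For a block to be \emph{bad} one does not need a closed site in every column of the window; one needs a blocking configuration, i.e.\ (by the $\lambda$-path duality) a family of admissible $\lambda$-paths crossing the block, and controlling the probability of that event is essentially the original problem localized to a block. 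Moreover, the renormalization route you choose creates two further unresolved difficulties that you yourself flag but do not settle: (i) the compatibility of boundary heights across faces of adjacent blocks, and (ii) the treatment of bad blocks after stochastic domination — routing the surface over a bad cluster costs relative height, and relative to the tilted plane a Lipschitz function can only gain $1-\alpha=1/L$ per step, so even isolated bad blocks require a quantitative ``climb over bad clusters'' argument. Also note a numerical slip: over a block of side $L$ in a tilted direction the slack between $F^*$ and the steepest Lipschitz profile is $\Theta(k)$, not $1$, and a vertical window of height $\Theta(L)$ above $F^*$ is not reachable from $O(1)$ boundary data within a single block.

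The paper avoids all of these issues by coarse-graining the \emph{dual} objects instead of the surface: it tiles $\Z^{d+1}$ by parallelepipeds aligned with $L^{\alpha,d}_\eta$, of side $(1-\alpha)^{-1}$ in the $k$ tilted directions and side $1$ otherwise (so $\lceil(1-\alpha)^{-1}\rceil^k$ sites each), declares a box closed if it contains a closed site, and shows that any admissible coarse-grained $\lambda$-path of length $M$ reaching height $h$ must make at least $\frac{M-k}{2(k+1)}$ upward steps, hence must see at least $\gamma M$ closed sites with $\gamma=\frac{1}{4(k+1)}$. An exponential Chebyshev bound against the path count $(2(2d+1))^M$ then gives exponential decay of $\E_p[|\mathcal L^{\alpha,d}_\eta(h)|]$ whenever $q\le C(k,d)(1-\alpha)^k$, and Lemma \ref{lem:surfaceExistence} concludes. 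This first-moment argument on paths requires no gluing, no boundary matching, and no stochastic domination. Your parenthetical ``alternative route'' (a direct Peierls-type count of obstructing closed configurations) is in fact the approach that works, but as written it too is only a one-sentence sketch; to make your proof complete you would need to carry out that count, which is where the combinatorial inequality $U\ge\frac{M-k}{2(k+1)}$ and the Chernoff estimate do the real work.
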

 
 \begin{proposition}\label{prop:D1}
 For $d=1$ one has $q_L(\alpha,1,1) \gtrsim (1-\alpha)$ as $\alpha \rightarrow 1$, which together with Proposition \ref{prop:UBqAsympAlpha}  below yields
 \begin{align*}
  q_L(\alpha,1,1) \sim (1-\alpha), \qquad \text{ as } \alpha \rightarrow 1.
 \end{align*}
 \end{proposition}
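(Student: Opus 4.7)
The proposition asserts $q_L(\alpha,1,1)\gtrsim (1-\alpha)$ as $\alpha\to 1$, i.e.\ that for every $\varepsilon>0$, an open Lipschitz surface above the tilted line $L^{\alpha,1}_1$ exists $\P_p$-a.s.\ whenever $q:=1-p < (1-\varepsilon)(1-\alpha)$ and $\alpha$ is sufficiently close to $1$. By the $0$--$1$ law of Proposition~\ref{prop:equiv} it suffices to exhibit such a surface with positive probability.

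My plan is to analyse the \emph{slack} process $T_x := F(x) - \lfloor \alpha x\rfloor - 1 \in \Z_{\ge 0}$ associated with a candidate Lipschitz surface $F$. Setting $\beta_x := \lfloor \alpha(x+1)\rfloor - \lfloor \alpha x\rfloor \in \{0,1\}$, the Lipschitz condition translates to $T_{x+1} - T_x = \Delta_x - \beta_x$ with $\Delta_x \in \{-1,0,1\}$, subject to openness of $(x+1, F(x)+\Delta_x)$. The sequence $(\beta_x)$ is deterministic but has asymptotic density $1-\alpha$ of zeros; at these \emph{slow} columns the slack is allowed to strictly increase, whereas at \emph{fast} columns ($\beta_x=1$) it can only stay constant or decrease. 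In particular, at $T_x = 0$ on a fast column the step $\Delta_x=1$ is forced and the site $(x+1, F(x)+1)$ must be open on pain of failure. I would construct $F$ greedily from left to right by choosing at each step the smallest $T_{x+1}\ge 0$ compatible with an open site and the Lipschitz constraint.

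The heart of the argument is then a drift analysis of $(T_x)$, viewed as a Markov-type chain on $\Z_{\ge 0}$. At $T_x \ge 1$, a fast column contributes expected drift $-q + O(q^2)$ while a slow column contributes $+q + O(q^2)$. Over a typical block of $\sim 1/(1-\alpha)$ fast columns between two consecutive slow columns, the minimum level above the line that the surface must occupy is geometrically distributed: its tail is $(1-p^{1/(1-\alpha)})^{k-1}$, and since $p^{1/(1-\alpha)} \to e^{-q/(1-\alpha)}$ this tail is summable precisely when $q/(1-\alpha) < 1$. A Foster--Lyapunov / reflected-random-walk argument then yields that $(T_x)$ stays finite forever with positive probability as soon as $q < (1-\varepsilon)(1-\alpha)$. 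Finally, a bi-infinite open Lipschitz surface is produced by running the greedy algorithm from both $+\infty$ and $-\infty$ and patching the two half-constructions on a common slow column where they coincide.

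The main obstacle will be extracting the sharp prefactor $(1-\alpha)$: a naive Peierls/contour estimate only yields $q_L(\alpha,1,1)\gtrsim (1-\alpha)/\log(1/(1-\alpha))$, so removing the logarithm requires a precise renewal-type computation of excursion probabilities for the reflected slack chain, exploiting the one-dimensional Markov structure. A secondary technical point is that $(\beta_x)$ is only quasi-periodic for irrational $\alpha$; the drift computation must be carried out uniformly in $x$ by invoking equidistribution of $(\alpha x \bmod 1)$.
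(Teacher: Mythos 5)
There is a genuine gap, and it lies in the construction itself, not just in the deferred constant-chasing. Your plan builds the surface greedily from left to right, choosing at each column the smallest admissible height above the line for which the site is open, and then analyzes the slack $T_x$ as a forward Markov chain. But the lowest open Lipschitz surface is \emph{not} adapted to the left-to-right filtration: a closed site at $(y,m)$ with $y>x$ forces $F(y)\ge m+1$ and hence, by the Lipschitz constraint propagating backwards, $F(x)\ge m+1-(y-x)$. Concretely, your greedy construction gets stuck almost surely: with probability $q^3>0$ per column all three Lipschitz-admissible continuations $(x+1,F(x)-1)$, $(x+1,F(x))$, $(x+1,F(x)+1)$ are closed, and this happens at infinitely many columns, at which point no choice of $\Delta_x$ is available and one must backtrack and raise $F$ at earlier columns. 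Repairing this requires anticipating obstacles arbitrarily far to the right, which is exactly the content of the $\lambda$-path/blocking-surface duality the paper uses; your ``one-dimensional Markov structure'' of the reflected slack chain, on which the whole drift and renewal analysis rests, is therefore not actually present. In addition, you concede that the outlined Peierls-type estimate only yields $q_L(\alpha,1,1)\gtrsim (1-\alpha)/\log(1/(1-\alpha))$ and that removing the logarithm needs a ``precise renewal-type computation'' that you do not supply; since the sharp prefactor is precisely the assertion of the proposition, this is not a technicality that can be left open.

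For comparison, the paper avoids any explicit surface construction. It bounds $\P_p(L^{\alpha,1}_{(1)}\rightarrowtail(0,h))$ by summing over the starting column $n$ of an admissible $\lambda$-path; small $n$ are handled by a direct multinomial path count, and for large $n$ one introduces $Y_n$, the maximal height above column $-n$ reachable by an admissible $\lambda$-path from the origin restricted to columns $\ge -n$. The increments satisfy the stochastic domination $Y_{n+1}-(Y_n-1)\preceq \bar Y_0$ with $\E_p[\bar Y_0]\le q+Cq^2$, and a large deviation estimate then shows that reaching height $-\alpha n$ has exponentially small probability as soon as $q+Cq^2<1-\alpha$. This superadditivity-plus-LDP mechanism is what delivers the sharp drift condition $q<1-\alpha$ (up to $O(q^2)$) without any logarithmic loss, and it correctly accounts for the two-sided, non-adapted nature of the extremal surface. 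Your drift heuristic identifies the right threshold, but to turn it into a proof you would essentially have to reconstruct this $\lambda$-path argument.
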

 
\subsection{Upper Bounds for $q_L(\alpha, d, k)$} 
 

 \begin{proposition}[Asymptotic behavior for $d \rightarrow \infty$]\label{prop:UBqAsympD}
  For every $\alpha \in [0,1)$ there exists a constant $C(\alpha)$ such that
  \begin{align*}
   q_L(\alpha,d,d) \lesssim C(\alpha) d^{-\frac{1}{1-\alpha}}, \qquad \text{ as } d\rightarrow \infty.
  \end{align*}
  More precisely, $C(\alpha) = \theta^{\frac{1}{1-\alpha}}/({\rm e}^{\theta} -1)$, where $\theta$ is the unique solution to 
  $\theta {\rm e}^{\theta}/({\rm e}^{\theta} -1) = 1/(1-\alpha)$ and $C(0) = 1$.
 \end{proposition}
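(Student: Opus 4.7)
The stated bound on $q_L(\alpha,d,d)=1-p_L(\alpha,d,d)$ is equivalent to a lower bound on $p_L(\alpha,d,d)$, so the task is to show non-existence of an open Lipschitz surface above $L^{\alpha,d}_{\eta}$ with $\eta=(1,\ldots,1)$ for $q=1-p$ slightly larger than $C(\alpha)d^{-1/(1-\alpha)}$. By Proposition~\ref{prop:equiv} and ergodicity of the vertical shift, it suffices to exhibit a blocking event of positive probability.

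My plan is to characterise existence of an open Lipschitz surface via the minimal surface $F_*$, defined as the fixed point of an iterative raising procedure: set $F^{(0)}(\bar y)$ to be the smallest integer height $h$ above the tilted plane such that $(\bar y,h)$ is open, then recursively replace $F^{(n)}(\bar x)$ by the smallest open height in column $\bar x$ that is Lipschitz-compatible with $F^{(n)}$ across all other columns. An open Lipschitz surface exists iff this procedure stabilises at a finite function. Non-stabilisation is equivalent to the existence of an infinite \emph{forcing chain}: a sequence of columns and associated closed blocks that propagates an ever-increasing lower bound on the surface into a small neighbourhood.

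Quantitatively, I would parametrise chains by an integer $\tau$ (the discrete version of $\theta$), with each step consisting of a block of $\tau$ consecutive closed sites immediately above the tilted plane in some column; each step has probability $q^\tau$, and the number $N_\tau(d)$ of admissible successor columns, determined jointly by the Lipschitz constraint and the tilt $\alpha$, scales as a $d$-dependent count of lattice points in a $d$-dimensional simplex of size $\tau/(1-\alpha)$. A first-moment argument for branching chains yields the critical condition $q^\tau\cdot N_\tau(d)\gtrsim 1$; optimising $\tau$ continuously reduces to the equation $\theta\mathrm{e}^\theta/(\mathrm{e}^\theta-1)=1/(1-\alpha)$, giving the sharp constant $C(\alpha)=\theta^{1/(1-\alpha)}/(\mathrm{e}^\theta-1)$.

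The main obstacle is the precise combinatorial enumeration of admissible chain-steps in $d$ dimensions, together with the verification that the chain genuinely prevents stabilisation of the iteration rather than merely causing a local divergence. The Poissonian form of the equation for $\theta$ strongly suggests that the lattice-point count in the $d$-simplex has a Stirling/Poisson asymptotic as $d\to\infty$, and the optimisation over $\theta$ reflects the associated large-deviations rate function. Making this explicit and carrying out the bookkeeping of prefactors carefully to recover the stated $C(\alpha)$ is the technically most delicate part of the argument.
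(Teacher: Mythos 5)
Your reduction of the statement to exhibiting, with positive probability, an infinite structure that forces the minimal surface upward is the right starting point, and your heuristic optimisation over the block length $\tau$ does land on the correct transcendental equation for $\theta$. But there is a genuine gap at the core of the argument: a first-moment computation showing $q^\tau N_\tau(d)\gtrsim 1$ does not establish the \emph{existence} of an infinite forcing chain. A large expected number of blocking structures is perfectly consistent with their non-existence with high probability; to conclude survival you would need either genuine independence of the successive offspring counts (which fails, since distinct chains share sites and closed blocks) or a second-moment/embedded-branching argument on a restricted family of chains. Since the proposition asserts an upper bound on $q_L$, i.e.\ non-existence of the surface for $q$ slightly above $C(\alpha)d^{-1/(1-\alpha)}$, this existence direction is exactly what your sketch must deliver and does not. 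You flag the enumeration of $N_\tau(d)$ as the delicate part, but the missing probabilistic step is the more serious one, and it is where the constant $C(\alpha)$ would actually be earned.

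The paper takes a different and much shorter route: it constructs a measure-preserving map $T$ from configurations on $\Z^{d+1}$ to configurations on $\Z^d$ via a greedy height function $H_\omega$, under which oriented $\rho$-paths correspond to admissible reversed $\lambda$-paths descending linearly relative to the tilted plane. This yields $q_L(\alpha,d,d)\le q_c(\rho,d)$ for every $\rho>1-\alpha$, and the sharp constant then follows from the known asymptotics $\lim_{d\to\infty} d^{1/\rho}q_c(\rho,d)=\theta^{1/\rho}/(\ee^{\theta}-1)$ for $\rho$-percolation (Theorem 2 in \cite{KeSu-00}), together with a continuity lemma for $\rho\mapsto q_c(\rho,d)$ proved via the subadditive ergodic theorem and a coupling. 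In other words, the hard second-moment work your plan would require is exactly what is outsourced to that reference; re-deriving it from scratch amounts to reproving that theorem. If you wish to keep a self-contained argument, you must replace the first-moment step by a genuine construction of a supercritical embedded process, and then still convert "the forcing chain exists with positive probability" into "$\P_p(\mathsf{LIP}^{\alpha,d}_{\eta})=0$" uniformly in the target height, which the paper handles by translation invariance (shifting the plane down by $\lfloor(\alpha-\delta)n\rfloor$) followed by the zero-one law of Proposition \ref{prop:equiv}.
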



 \begin{proposition}[General bound]\label{prop:UBqAsympAlpha}
  For any $\alpha \in [0,1)$ and $d \in \N$ 
  \begin{align*}
  q_L(\alpha,d,d) \leq \frac{d!(1-\alpha)^d}{1+d!(1-\alpha)^d} \leq d!(1-\alpha)^d.
  \end{align*}
 \end{proposition}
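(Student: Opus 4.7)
The statement is an upper bound on $q_L(\alpha,d,d)$, equivalently the lower bound $p_L(\alpha,d,d) \geq 1/(1+d!(1-\alpha)^d)$ on the critical probability. The plan is to show that for any $p$ strictly below this threshold, $\mathbb{P}_p$-a.s.\ there is no open Lipschitz surface above the tilted plane, which by \eqref{eq:qDef} yields the claimed bound on $q_L$.

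First I would use Proposition~\ref{prop:equiv} to reduce the problem: it suffices to work with the inverted pyramid $\nabla^{\alpha,d}_\eta$ for $\eta=(1,\ldots,1)$, and by the ergodicity argument already invoked in the proof of that proposition, $\mathbb P_p(\mathsf{LIP}(\nabla^{\alpha,d}_\eta)) \in \{0,1\}$, so it is enough to exhibit a positive-probability event on which no open Lipschitz surface above the pyramid can exist. By the reflection symmetries of $\nabla^{\alpha,d}_\eta$ used in Proposition~\ref{prop:equiv}, it further suffices to analyse the situation in the positive orthant $\mathbb Z_{\geq 0}^d$.

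The central step is a first-moment / counting argument. Pinning $F(0)=h\geq 1$, the Lipschitz and pyramid constraints confine $F(\bar x)$ to the window $[\lfloor\alpha\|\bar x\|_1\rfloor+1,\,h+\|\bar x\|_1]$, of size $(1-\alpha)\|\bar x\|_1 + O(h)$, and the probability that this window is entirely closed is $q^{(1-\alpha)\|\bar x\|_1+O(h)}$. I would localise the analysis to the unit hypercube $\{0,1\}^d$ and the corner $\bar x=(1,\ldots,1)$: here the window has length roughly $(1-\alpha)d$, and the $d!$ monotone coordinate-paths from $0$ to the corner each consume $(1-\alpha)$ units of Lipschitz slack per step. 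Summing over these $d!$ orderings should produce the claimed combinatorial factor $d!(1-\alpha)^d$, and the critical relationship $q/p \geq d!(1-\alpha)^d$ would then emerge as the threshold at which the expected number of ``non-blocked'' candidate surfaces falls below one, so that the Borel--Cantelli / Markov step forces a.s.\ non-existence.

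The main obstacle will be extracting the \emph{exact} constant $d!(1-\alpha)^d$ (as opposed to a weaker one like $(2d)^d(1-\alpha)^d$ produced by a naive union bound). This sharpness presumably requires either an inclusion--exclusion argument that exploits the permutation symmetry of the $d!$ monotone paths in the positive orthant, or a transfer-matrix / generating-function computation along the diagonal slices $\{\bar x : \|\bar x\|_1 = n\}$. The closed-form bound $d!(1-\alpha)^d / (1+d!(1-\alpha)^d)$ strongly suggests an identity of permanent- or simplex-volume type is hiding in the background, and making this precise is where the bulk of the technical work would lie.
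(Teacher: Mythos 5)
Your reduction to showing a.s.\ non-existence of the surface for $q > d!(1-\alpha)^d/(1+d!(1-\alpha)^d)$ is the right starting point, but the central step you propose --- a first-moment bound on the number of ``non-blocked'' candidate Lipschitz surfaces --- is not a viable route, and the part you defer as ``the bulk of the technical work'' is in fact the entire proof. A union bound over candidate surfaces cannot work in the regime $p \to 1$: restricted to a box of side $n$ there are of order $3^{n^d}$ Lipschitz functions, each fully open with probability about $p^{n^d}$, so the expected number of open candidate surfaces only vanishes for $p$ bounded away from $1$, which is the wrong regime entirely. Likewise, the probability $q^{(1-\alpha)\Vert \bar x\Vert_1+O(h)}$ that a single column's window is entirely closed tends to $0$ as $\Vert \bar x\Vert_1\to\infty$, so no single column blocks all surfaces; what is needed is a cumulative effect along a path, and your proposal contains no mechanism for producing one.

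The paper's proof supplies exactly this mechanism, and it is constructive rather than first-moment. One builds a reversed admissible $\lambda$-path by repeatedly walking diagonally through the positive orthant until the first closed site on the diagonal is met and then taking an upward step; this gives a renewal structure in which each renewal changes the height relative to the tilted plane by $(\alpha-1)T+1$ in expectation, where $T$ is the $\Vert\cdot\Vert_1$-distance to the first closed site of the form $(\bar x,\Vert \bar x\Vert_1)$ with $\bar x\in\N_0^d$ (Lemma \ref{lem:CritLSNonExist}). If $\E_p[T]<1/(1-\alpha)$ the drift is negative, the law of large numbers sends the path arbitrarily far below the plane, and translation invariance together with the $0$--$1$ law of Proposition \ref{prop:equiv} kills the surface. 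The constant $d!$ then has nothing to do with monotone lattice paths or permanents: it comes from the simplex-volume bound $\vert\{\bar x\in\N_0^d:\Vert \bar x\Vert_1\le j\}\vert=\binom{j+d}{d}\ge (j+1)^d/d!$, which together with the geometric law of the index of the first closed site along the chosen ordering and Jensen's inequality gives $\E_p[T]\le \left(d!\left(1/q-1\right)\right)^{1/d}$; this is $<1/(1-\alpha)$ precisely when $q> d!(1-\alpha)^d/(1+d!(1-\alpha)^d)$. Both the renewal construction and this computation are missing from your proposal.
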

 \begin{remark}
Since $q_L(\alpha, d,k) \leq q_L(\alpha, k,k)$ by Lemma \ref{lem:mon}, Proposition \ref{prop:UBqAsympAlpha}
immediately implies upper bounds for $q_L(\alpha, d, k)$ for any $k=1,\ldots,d$ also.
 \end{remark}


\section{Proofs} \label{sec:proofs}

As explained in \cite{DiDoGrHoSc-10} and \cite{GrHo-12} for standard Lipschitz percolation, the lowest open Lipschitz surface (above $L^{\alpha,d}_{\eta}$) may be constructed as a blocking surface to a certain type of paths called (admissible) $\lambda$-paths. This characterization is the core of the proofs in this section.

Denote by $e_1, \ldots, e_{d+1} \in \Z^{d+1}$ the standard basis vectors of $\Z^{d+1}$.

\begin{definition} \label{def:adLamPa}
 For $x,y \in \Z^{d+1}$ a \emph{$\lambda$-path} from $x$ to $y$ is any finite sequence $x=u_0, \ldots, u_n=y$ 
 of distinct sites in $\Z^{d+1}$ such that for all $ i = 1, \ldots,n$
 \begin{align*}
    u_i - u_{i-1} \in \{e_{d+1}\} \cup \{-e_{d+1} \pm e_{j} \mid j = 1, \ldots, d\}.
 \end{align*}
 Such a path will be called \emph{admissible (with respect to $\omega$)}, if for all $i = 1, \ldots, n$ the following implication holds:
 \begin{align*}
  \text{ If } u_i-u_{i-1} = e_{d+1}, \text{ then } u_i \text{ is closed (with respect to $\omega$).}
 \end{align*}
\end{definition}
For any $x,y \in \Z^{d+1}$ denote by $x \rightarrowtail y$ the event that there exists an admissible $\lambda$-path from $x$ to $y$. 
We then define for all $ x \in \Z^d$,  $\alpha \in [0,1)$, $d \in \N$ and $\eta \in \{-1,0,+1\}$  the function
\begin{align} \label{eq:LSfromlambda}
\; F^{\alpha, d}_{\eta}(\bar x):= \sup\{n \in \Z \mid \exists y \in L^{\alpha, d}_{\eta}:\; y \rightarrowtail (\bar x, n) \} + 1.
\end{align}
Note that the graph of $F$ is contained in $L^{\alpha,d}_{\eta}$. As in \cite{DiDoGrHoSc-10} and  \cite{GrHo-12}, it is easy to see that  
the function defined in \eqref{eq:LSfromlambda} 
describes a Lipschitz function whose graph consists of open sites,
if and only if it is finite  for all $\bar x \in \Z^d$. This in turn holds true if and only if it is finite at $\bar x = 0$. Thus, in the analysis of the existence of an open Lipschitz surface we can focus on the behavior of $F^{\alpha, d}_{\eta}(0)$ as defined above.

It will be useful to define $L^{\alpha, d}_{\eta}(h) := L^{\alpha,d}_{\eta} + he_{d+1}$ and denote by $\mathcal L^{\alpha,d}_{\eta}(h)$ the random set of sites in $L^{\alpha, d}_{\eta}(h)$ reachable by an admissible $\lambda$-path started in the origin.
We have taken the practice of marking elements of $\Z^d$ with a bar as in $\bar x \in \Z^d$ in order to distinguish them from  canonical elements $x \in \Z^{d+1}$. In the same vein, for $x = (x_1, \ldots, x_{d+1})\in \Z^{d+1}$, we use $\bar x$ to refer to $(x_1, \ldots, x_d)$ as well as $(\bar x,x_{d+1})$ to denote $x$. In addition, by a slight abuse of notation we use $0$ to denote the origin of $\Z, \Z^d$ and $\Z^{d+1}$. As we have tacitly done above already,
 it will be necessary to distinguish between $\N$ and $\N_0$. For a set $A$ we will use $\vert A \vert$ to denote its cardinality. 

In addition, due to the symmetries of $\Z^d$ and the product structure of $\P_p$, we will w.l.o.g.
 from now on assume that for any $k = 1, \ldots, d$, the vector $\eta$ is of the form
\begin{align*}
 \eta = (\underbrace{1, \ldots,1}_{ k \text{ times}},\underbrace{0,\ldots,0}_{d-k \text{ times}}).
\end{align*}

\subsection{Lower Bounds for $q_L(\alpha, d, k)$} 


We begin with a criterion ensuring the existence of an open Lipschitz surface by providing suitable conditions for the $\P_p$-a.s. finiteness of $F^{\alpha, d}_{\eta}$ as defined in \eqref{eq:LSfromlambda}.
\begin{lemma}[Criterion for existence of an open Lipschitz surface]\label{lem:surfaceExistence}
Let $F^{\alpha, d}_{\eta}$ be defined as in \eqref{eq:LSfromlambda}. Then, for any $\bar x \in \Z^d$ and $h \in \N$,
\begin{equation} \label{eq:tailEstCond}
\P_p \Big ( F^{\alpha,d}_{\eta}(\bar{x}) - \Big \lfloor \alpha \sum_{i=1}^d \eta_i \bar{x}_i \Big \rfloor \ge h \Big) \le  \E_p [\vert \mathcal L_{\eta}^{\alpha, d}(h-2) \vert ].
\end{equation} 
 In particular, if
 \begin{align} \label{eq:summability}
  \lim_{h \rightarrow \infty} \E_p[\vert \mathcal L^{\alpha,d}_{\eta}(h) \vert ] = 0,
 \end{align}
then 
\begin{equation} \label{eq:surfaceEx}
\P_p(\mathsf{LIP}^{\alpha, d}_{\eta})  = 1.
\end{equation}
\end{lemma}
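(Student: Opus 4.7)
The plan is to transform (4.3) into a statement about admissible $\lambda$-paths originating at the origin and then close with a level-monotonicity argument followed by Markov's inequality. Unfolding (4.2), the event $\{F^{\alpha,d}_\eta(\bar x) - f \ge h\}$, with $f := \lfloor \alpha \sum_i \eta_i \bar x_i \rfloor$, says exactly that there is an admissible $\lambda$-path from some $y \in L^{\alpha,d}_\eta$ to some $(\bar x, n)$ with $n \ge h + f - 1$. I would time-reverse such a path, apply the vertical reflection $\tau : (\bar x, x_{d+1}) \mapsto (\bar x, -x_{d+1})$ (under which $\P_p$ is invariant), use the $e_{d+1}$-shift symmetry of $\P_p$ to convert the reversed admissibility convention back to the original one, and finally translate so that the resulting path starts at $0$. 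A careful bookkeeping of the floor in the definition of $L^{\alpha,d}_\eta$ (which can shift integer values by at most one unit under each of these operations) shows that the transformed event is contained in $\{\mathcal L^{\alpha,d}_\eta(h') \ne \emptyset \text{ for some } h' \ge h-2\}$; this is the source of the ``$-2$''.

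To finish (4.3), I would introduce the level function $\ell(u) := u_{d+1} - \lfloor \alpha \sum_i \eta_i u_i \rfloor$, so that $u \in L^{\alpha,d}_\eta(k) \iff \ell(u) = k$, and verify by direct inspection of the admissible step vectors that $\ell$ increases by at most $+1$ per step: an up-step $e_{d+1}$ changes $\ell$ by exactly $+1$, whereas a down-sideways step $-e_{d+1} \pm e_j$ changes $\ell$ by an amount in $\{-2,-1,0\}$ (depending on $\eta_j$ and on the fractional part of $\alpha \sum_i \eta_i u_i$). Starting from $\ell(0) = 0$, any admissible $\lambda$-path from the origin that reaches level $\ge h-2$ must therefore visit a site of $L^{\alpha,d}_\eta(h-2)$; hence the event above is contained in $\{|\mathcal L^{\alpha,d}_\eta(h-2)| \ge 1\}$, and Markov's inequality gives (4.3). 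For (4.5), I would substitute $\bar x = 0$ into (4.3) and send $h \to \infty$: the hypothesis (4.4) forces $\P_p(F^{\alpha,d}_\eta(0) = \infty) = 0$, and the discussion preceding the lemma then furnishes an a.s.\ finite Lipschitz function whose graph consists of open sites above $L^{\alpha,d}_\eta$.

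The hard part will be the reversal--reflection--shift step of the first paragraph. Time-reversal converts the original admissibility (``up-step lands on a closed site'') into its reverse (``down-step leaves a closed site''), and the subsequent $\tau$ leaves us with admissibility imposed on the starting rather than the ending site of an up-step. Matching this back with the original $\mathcal L^{\alpha,d}_\eta$ requires using the $e_{d+1}$-translation invariance of $\P_p$ to interchange the two conventions, and carrying this out while pinning down how the tilted plane $L^{\alpha,d}_\eta$ deforms under these floor-sensitive operations is what forces the worst-case loss of two units on the right-hand side of the tail estimate.
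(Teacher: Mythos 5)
Your overall architecture (reduce to admissible $\lambda$-paths emanating from the origin, then bound the tail probability by the first moment of $|\mathcal L^{\alpha,d}_\eta(h-2)|$ via level monotonicity) is the right one, and your second paragraph is sound: the level function $\ell_\eta(u)=u_{d+1}-\lfloor\alpha\sum_i\eta_iu_i\rfloor$ increases by at most one per $\lambda$-step, so a path from the origin reaching level $\ge h-2$ contains a reachable site of $L^{\alpha,d}_\eta(h-2)$; this is the same mechanism as the paper's observation that deleting the last upward step of an admissible path again yields an admissible path. The gap is in the reversal--reflection--shift step, which is where you yourself locate the difficulty, and which is both unnecessary and, as described, incorrect. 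The vertical reflection $\tau$ maps the tilted plane $L^{\alpha,d}_\eta$ to (essentially) $L^{\alpha,d}_{-\eta}$: for a path from $y\in L^{\alpha,d}_\eta$ to $(\bar x,n)$, the endpoint $w=\tau(y)-\tau((\bar x,n))=(\bar y-\bar x,\,n-y_{d+1})$ of your transformed path has $\ell_\eta(w)=n-\lfloor A\rfloor-\lfloor A-B\rfloor$ with $A=\alpha\sum_i\eta_iy_i$ and $B=\alpha\sum_i\eta_i\bar x_i$, which differs from $h$ by roughly $2(\lfloor B\rfloor-\lfloor A\rfloor)$ --- an uncontrolled drift, unbounded below as $y$ ranges over the plane. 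So the claimed containment in $\{\mathcal L^{\alpha,d}_\eta(h')\ne\emptyset\text{ for some }h'\ge h-2\}$ fails; it holds only relative to the oppositely tilted plane, and repairing it requires either a further reflection $\bar u\mapsto-\bar u$ or an appeal to the $\eta\leftrightarrow-\eta$ symmetry, neither of which you mention. Separately, there is no single measure-preserving transformation realizing the containment, because your final translation depends on the (random) starting point $y$; one must first take a union bound over $y\in L^{\alpha,d}_\eta$, apply the transformation term by term, and then reassemble the sum into $\E_p[|\mathcal L^{\alpha,d}_\eta(h-2)|]$ using that distinct $y$ produce distinct target sites.

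The paper's route shows that none of the reversal or reflection is needed. It first observes that $F^{\alpha,d}_\eta(0)+1$ stochastically dominates $F^{\alpha,d}_\eta(\bar x)-\lfloor\alpha\sum_i\eta_i\bar x_i\rfloor$, reducing to $\bar x=0$, and then estimates
\begin{equation*}
\P_p\big(F^{\alpha,d}_\eta(0)\ge h+1\big)\le\sum_{y\in L^{\alpha,d}_\eta}\P_p\big(y\rightarrowtail(0,h)\big)=\sum_{y\in L^{\alpha,d}_\eta}\P_p\big(0\rightarrowtail(0,h)-y\big),
\end{equation*}
where plain translation by $-y$ preserves the admissibility convention verbatim: the relation $\rightarrowtail$ is already oriented from the plane to the high site, and all one needs is for the path to start at the origin, not to change direction. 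The translated endpoint $(0,h)-y$ lies within one lattice unit of $L^{\alpha,d}_\eta(h)$, the descend-one-level observation absorbs the floor errors, and the two reductions together account for the $h-2$. I recommend rewriting your first paragraph along these lines --- union bound over $y$, translate by $-y$, no reversal, no reflection --- and keeping your level argument as is.
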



\begin{proof}[Proof of Lemma \ref{lem:surfaceExistence}]
In order to prove \eqref{eq:tailEstCond} we start by observing that 
 for every $\bar{x} \in \mathbb Z^{d},$ 
\begin{align} \label{eq:stochDom}
\text{the random variable }F^{\alpha,d}_{\eta}(0)+1 \text{ stochastically dominates }
F^{\alpha,d}_{\eta}(\bar{x}) - \Big \lfloor \alpha \sum_{i=1}^d \eta_i \bar{x}_i \Big \rfloor,
\end{align}
where the $+1$ stems from lattice effects.
Now we estimate 
\begin{align*}
 \P_p(F^{\alpha,d}_{\eta}(0)  \ge h + 1) & =  \mathbb P_p 	\Big( \exists z \in L^{\alpha, d}_{\eta}\,: \, z \rightarrowtail (0, h) \Big)  
		\leq \sum_{z \in L^{\alpha, d}_{\eta}} \mathbb P_p ( z \rightarrowtail (0,h) )\\
		 & \le	\sum_{z \in L^{\alpha, d}_{\eta}(h)} \mathbb P_p  ( 0 \rightarrowtail z ) = \E_p[\vert \mathcal L_{\eta}^{\alpha, d}(h) \vert ]. 
\end{align*}
In combination with \eqref{eq:stochDom}, this supplies us with 
\eqref{eq:tailEstCond} which finishes the proof. Note that we used the fact that if a site 
 $x = (\bar x, h)$ with $h \ge \Big \lfloor \alpha \sum_{i=1}^d \eta_i \bar{x}_i \Big \rfloor$ is reachable from $L^{\alpha, d}_{\eta}$ by an admissible $\lambda$-path, then so is any site 
$x = (\bar x, i)$ with $\Big \lfloor \alpha \sum_{i=1}^d \eta_i \bar{x}_i \Big \rfloor \le i \le h$. This stems from the observation that if we remove the last step the admissible $\lambda$-path took in the upward direction and then trace it, we obtain again an admissible $\lambda$-path reaching the site right below $x$.

The fact that \eqref{eq:summability} implies \eqref{eq:surfaceEx} follows immediately from \eqref{eq:tailEstCond} in combination with
the
observation below \eqref{eq:LSfromlambda}.
\end{proof}


The common core of the proofs of Propositions \ref{prop:allAlphaUBp} and  \ref{prop:qLBkDepOnd} can be summarized in the following, somewhat technical lemma.

\begin{lemma}[A general lower bound]\label{lem:LBwithPi}
 Let $\alpha \in [0,1)$, $d \in \N$ and $k = 0, \ldots, d$. Then for any choice of
 \begin{align} \label{eq:pchoice}
   p_1, p_2, p_3, p_4 \in (0,1) \text{ such that } \sum_{i=1}^4 p_i = 1
 \end{align}
 we obtain 
\begin{align}\label{eq:LBqWithp_i}
 q_L(\alpha, d,k) \geq \min\left\{\frac{1}{k} p_1 \sqrt{p_2 p_3}\, ,\, p_1 \Big( \frac{p_3}{k}\Big)^{\frac 1{1-\alpha}}\, ,\, \frac{p_1p_4}{2(d-k)}\right\}.
\end{align}
\end{lemma}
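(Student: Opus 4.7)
My plan is to use Lemma \ref{lem:surfaceExistence}, which reduces the claim to showing that $\lim_{h \to \infty} \E_p[|\mathcal L^{\alpha,d}_\eta(h)|] = 0$ whenever $q := 1-p$ is strictly below the right-hand side of \eqref{eq:LBqWithp_i}. Since each admissible $\lambda$-path with $u$ upward steps contributes probability $q^u$, I will bound
\[
\E_p[|\mathcal L^{\alpha,d}_\eta(h)|] \;\le\; \sum_{\substack{\lambda\text{-paths from }0\\\text{to relative height }h}} q^u,
\]
where the relative height of $(\bar x, y)\in\Z^{d+1}$ is $y - \lfloor\alpha\sum_{j=1}^k \bar x_j\rfloor$ and I adopt the convention $\eta = (1,\ldots,1,0,\ldots,0)$.

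The core of the argument will be a two-parameter Markov/generating-function bound. For parameters $r > 0$ and $s > 1$, I weight each $\lambda$-path by $q^u r^H s^{V-\alpha H}$, where $V$ is the vertical coordinate and $H = \sum_{j\le k} x_j$ the tilted-axis sum of the endpoint. Step-wise multiplicativity gives the one-step generating function
\[
\tilde A(r,s) \;=\; qs + krs^{-1-\alpha} + kr^{-1}s^{-1+\alpha} + 2(d-k)s^{-1},
\]
in which the tilted $+e_j$ and $-e_j$ down-steps contribute $rs^{-1-\alpha}$ and $r^{-1}s^{-1+\alpha}$ respectively (since moving with the tilt drops relative height by $1+\alpha$ and against it by $1-\alpha$), untilted down-steps contribute $s^{-1}$, and each up-step contributes $qs$. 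Restricting to paths with $H\ge 0$ (on which $r^H\ge 1$ when $r\ge 1$) and summing the geometric series in $n$ yields $\sum_{V-\alpha H = h,\, H\ge 0} q^u \le s^{-h}/(1-\tilde A(r,s))$ provided $\tilde A(r,s)<1$. The complementary $H\le 0$ contribution is controlled identically with $r\le 1$, invoking the $\Z^d$-reflection symmetry that swaps $\pm e_j$ on tilted axes and preserves $\P_p$; both pieces decay geometrically in $h$.

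To extract the three terms of the minimum, I will allocate the budget $p_1+p_2+p_3+p_4=1$ termwise. The constraint $qs\le p_1$ gives $q\le p_1/s$; the constraint $2(d-k)s^{-1}\le p_4$ gives $s\ge 2(d-k)/p_4$ and hence the third term $p_1p_4/(2(d-k))$. The joint tilted constraints $krs^{-1-\alpha}\le p_2$ and $kr^{-1}s^{-1+\alpha}\le p_3$ are compatible (for some $r>0$) precisely when $s\ge k/\sqrt{p_2 p_3}$, since at the balanced choice $r=p_2 s^{1+\alpha}/k$ both constraints bind with equality when $s=k/\sqrt{p_2 p_3}$, yielding the first term $p_1\sqrt{p_2 p_3}/k$. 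The second term $p_1(p_3/k)^{1/(1-\alpha)}$ arises from the single-variable specialization $r=1$ upon allocating the entire tilted budget to the dominant down-left contribution $ks^{-1+\alpha}\le p_3$, which forces $s\ge (k/p_3)^{1/(1-\alpha)}$. The hard part will be coordinating the $r$-choices across the two sign-regimes of $H$ so that $\tilde A(r,s)<1$ holds in both while maintaining $s>1$: the balanced point works cleanly only in one regime (depending on whether $p_2\gtrless p_3$), so the combined argument either invokes the $\Z^d$-symmetry above or analyzes the three sufficient bounds separately, of which the minimum appears in the conclusion.
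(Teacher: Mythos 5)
Your proposal is correct and is essentially the paper's argument in generating\--function clothing: the reduction via Lemma \ref{lem:surfaceExistence}, the four\--fold classification of steps matched to the budget $(p_1,p_2,p_3,p_4)$, the endpoint constraint linking up\--steps to height, and the symmetry reduction to one sign\--regime all coincide with the paper's proof, in which the multinomial bound $\binom{n}{U,D^+,D^-,D}\le p_1^{-U}p_2^{-D^+}p_3^{-D^-}p_4^{-D}$ plays exactly the role of your condition $\tilde A(r,s)<1$ and the three resulting geometric series correspond to your three budget constraints (with $s=p_1/q$ and $r=k/(p_3 s^{1-\alpha})$ the two accounts match line for line). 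The one point you must settle the way you tentatively suggest is the $H\ge 0$ regime: it has to be folded into the $H\le 0$ case by the reflection symmetry on the tilted axes (the paper's $2^d$ negative\--orthant reduction), because treating it directly with $r\ge 1$ forces $s\ge (k/p_2)^{1/(1+\alpha)}$ and hence the extra requirement $q\le p_1(p_2/k)^{1/(1+\alpha)}$, which is not implied by the three terms of the stated minimum (e.g.\ for $k=1$, $\alpha=0$, $p_2<p_3$).
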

 Note that the above holds true for all possible choices of our parameters -- in particular for $ k \in \{0,d\}$ -- if we use the convention of $1/0=\infty$. This somewhat unelegant agreement may be justified in this case as it avoids  the need of repeating analogous 
 computations without the respective terms.

 
 \begin{proof}[Proof of Lemma \ref{lem:LBwithPi}]
  In order to obtain the existence of an open Lipschitz surface and thus the lower bound through Lemma \ref{lem:surfaceExistence}, we will show the following estimate under appropriate  assumptions on $q = 1-p$:
 
For $d \ge 1,$ $\alpha \in [0,1)$, $k = 1, \ldots, d$ and  $q$
smaller than the right-hand side of \eqref{eq:LBqWithp_i},
 there exist constants $\delta \in (0,1)$ and $C>0$ such that for all $h\in \N,$
\begin{equation} \label{eq:tailEst}
   \E_p [\vert \mathcal L_{\eta}^{\alpha, d}(h) \vert ] \leq C\delta^{ h-1}.
 \end{equation}
 
We will say that the $j$-th step of a $\lambda$-path $(u_n)$ is \emph{positive downward}, if $u_j-u_{j-1} \in \{-e_{d+1} + e_l \mid l = 1, \ldots, k\}$ and \emph{negative downward} if $u_j-u_{j-1} \in \{-e_{d+1} - e_l \mid l = 1, \ldots, k\}$ and use $D^+=D^+(u)$, resp $D^-=D^-(u)$ to denote the number of these steps. In analogy, $D=D(u)$ will denote the number of \emph{downward} steps such that $u_j-u_{j+1} \in \{-e_{d+1} \pm e_l \mid l = k+1, \ldots, d\}$ and $U = U(u)$ will be the number of \emph{upward} steps, i.e., those for which $u_j -u_{j-1} = e_{d+1}$.

Now for any natural numbers $U,$ $D^+,$ $D^-$ and $D$, the number of  $\lambda$-paths 
starting in the origin with $U$ upward steps as well as $D^+$ positive, $D^-$ negative and $D$ neutral downward steps, respectively, can be estimated from above by
\[\binom{U+D^++D^-+D}{U,D^+,D^-,D}k^{D^++D^-}(2(d-k))^D.\]
Thus the expected number of such paths which are admissible can  be upper bounded by
\begin{equation} \label{eq:expNrPathUB}
\binom{U+D^++D^-+D}{U,D^+,D^-,D}k^{D^++D^-}(2(d-k))^Dq^U.
\end{equation}
In addition, due to the multinomial theorem, for any $p_1,\ p_2,\ p_3,\ p_4$ chosen as in \eqref{eq:pchoice} we have
\begin{align*}
{\binom{U+D^++D^-+D}{U,D^+,D^-,D} p_1^U p_2^{D^+} p_3^{D^-} p_4^D \le 1,}
\end{align*}
and hence
\begin{align} \label{eq:multinomEst}
\binom{U+D^++D^-+D}{U,D^+,D^-,D} \le \Big(\frac{1}{p_1}\Big)^U \Big(\frac{1}{p_2}\Big)^{D^+} \Big(\frac{1}{p_3}\Big)^{D^-}\Big(\frac{1}{p_4}\Big)^D.
\end{align}
In order to simplify notation, note that the \lq best strategy\rq \ for admissible $\lambda$-paths is to go for the  negative orthant in the first $d$ coordinate axes, in the sense that
\begin{equation*} 
\sum_{y \in L^{\alpha, d}_{\eta}(h)}	\mathbb P_p(0 \rightarrowtail y) 	
 \le 2^d \sum_{y \in L^{\alpha, d}_{\eta}(h) \cap ((-\N_0)^d \times \Z)}	\mathbb P_p(0 \rightarrowtail y) 	.
\end{equation*}
 Since at each downward step of a $\lambda$-path the $(d+1)$-st coordinate of the path is decreased by one, the total number $U(u)$ 
 of upward steps of a $\lambda$-path $(u_n)$  starting in 0  and ending in $L^{\alpha, d}_{\eta}(h) \cap ((-\N_0)^d \times \Z)$ fulfills
\begin{align*}  
U(u) & = D^+(u) + D^-(u) + D(u) +  \floor{\alpha (D^+(u)-D^-(u))} + h
\end{align*}
and
$$
D^+(u)-D^-(u)  \leq 0.
$$
Using \eqref{eq:expNrPathUB} and \eqref{eq:multinomEst} and choosing $q <p_1$ 
we can thus estimate
\begin{align}
 \sum_{y \in L^{\alpha, d}_{\eta}(h) \cap ((-\N_0)^d \times \Z)}  \mathbb P_p(0 \rightarrowtail y)\notag\\
 &	\hspace{-65pt} \leq \sum_{\substack{D^+,D^-,D\geq 0\,:\,D^+-D^-\leq 0\\U =  h+D^++D^-+D +\floor{\alpha(D^+-D^-)} }}\left(\frac{q}{p_1}\right)^U\left(\frac{k}{p_2}\right)^{D^+}\left(\frac{k}{p_3}\right)^{D^-}\left(\frac{2(d-k)}{p_4}\right)^{D}\notag\\
													& \hspace{-65pt} \leq \sum_{\substack{D^+, D^-, D \geq 0,\\ D^- - D^+ \geq 0}} \left(\frac{q}{p_1}\right)^{ D^++D^-+D+\floor{\alpha(D^+-D^-)}+h}\left(\frac{k}{p_2}\right)^{D^+}\left(\frac{k}{p_3}\right)^{D^-}\left(\frac{2(d-k)}{p_4}\right)^{D}\notag\\
													& \hspace{-65pt} =  \sum_{n \geq 0}\sum_{\Delta \geq 0}  \sum_{m \geq 0}\left(\frac{q}{p_1}\right)^{ n+\Delta + n+ m +\floor{-\alpha\Delta} + h}\left(\frac{k}{p_2}\right)^{n}\left(\frac{k}{p_3}\right)^{\Delta + n}\left(\frac{2(d-k)}{p_4}\right)^{m}\notag\\
													&\hspace{-65pt}  = \left(\frac{q}{p_1}\right)^h \sum_{n \geq 0} \left(\frac{q^2k^2}{p_1^2p_2p_3}\right)^n\sum_{\Delta \geq 0}\left(\frac{q}{p_1}\right)^{\Delta + \floor{-\alpha\Delta}}\left(\frac{k}{p_3}\right)^{\Delta}\sum_{m \geq 0}\left(\frac{2(d-k)q}{p_1p_4}\right)^{m}\notag\\
													& \hspace{-65pt} \leq \left(\frac{q}{p_1}\right)^h \sum_{n \geq 0} \left(\frac{q^2k^2}{p_1^2p_2p_3}\right)^n\sum_{\Delta \geq 0}\left(\frac{q}{p_1}\right)^{\Delta(1-\alpha)-1}\left(\frac{k}{p_3}\right)^{\Delta}\sum_{m \geq 0}\left(\frac{2(d-k)q}{p_1p_4}\right)^{m} \notag\\
													& \hspace{-65pt}  = \left(\frac{q}{p_1}\right)^{h-1} \sum_{n \geq 0} \left(\frac{q^2k^2}{p_1^2p_2p_3}\right)^n\sum_{\Delta \geq 0}\left(\frac{q^{1-\alpha}k}{p_1^{1-\alpha}p_3}\right)^{\Delta}\sum_{m \geq 0}\left(\frac{2(d-k)q}{p_1p_4}\right)^{m}. \label{eq:EstReachability}
\end{align}
Now note that if 
\begin{align}\label{eq:boundq}
 q  < \min\left\{\frac{1}{k} p_1 \sqrt{p_2 p_3}\, ,\, p_1 \Big( \frac{p_3}{k}\Big)^{\frac 1{1-\alpha}}\, ,\, \frac{p_1p_4}{2(d-k)}\right\}
\end{align}
then all sums in \eqref{eq:EstReachability} converge and $q/p_1<1$. Thus 
\begin{align*}
 \E_p [\vert \mathcal L_{\eta}^{\alpha, d}(h) \vert ]	& = \sum_{y \in L^{\alpha, d}_{\eta}(h)}	\mathbb P_p(0 \rightarrowtail y) \\
							& \leq 2^d\left(\frac{q}{p_1}\right)^{h-1} \frac{1}{1-\frac{q^2k^2}{p_1^2p_2p_3}} \frac{1}{1-\frac{q^{1-\alpha}k}{p_1^{1-\alpha}p_3}} \frac{1}{1-\frac{2(d-k)}{p_1p_4}}
\end{align*}
and with 
\begin{align*}
 \delta=\delta(q,p_1) & := \frac{q}{p_1} \qquad \text{and} \\
  \qquad C=C(\alpha,d,k,q,p_1,p_2,p_3,p_4) & := 2^d\frac{p_1^2p_2p_3}{p_1^2p_2p_3-q^2k^2} \frac{p_1^{1-\alpha}p_3}{p_1^{1-\alpha}p_3-q^{1-\alpha}k}\frac{p_1p_4}{p_1p_4-2(d-k)q}
\end{align*}
we obtain the claim in \eqref{eq:tailEst}. Lemma \ref{lem:surfaceExistence} then guarantees the existence of an open Lipschitz surface for $q$ as in \eqref{eq:boundq} which completes the proof.
 \end{proof}

 Depending on our choice of the parameters $p_1, p_2, p_3, p_4$ we now obtain different bounds for the critical probability leading to the results of Propositions \ref{prop:allAlphaUBp} and  \ref{prop:qLBkDepOnd}.
 

\begin{proof}[Proof of Proposition \ref{prop:allAlphaUBp}]

In order to obtain Proposition \ref{prop:allAlphaUBp} set
\begin{align*}
 p_1 = \frac{1}{2} \qquad \text{and} \qquad p_2=p_3=\frac{1}{4} -\frac{1}{2}p_4.
\end{align*}
Note that since we consider the case of $k=d,$  the last term on the right-hand side of \eqref{eq:LBqWithp_i} 
is infinite and hence irrelevant. Comparing the first two terms on the right-hand side of \eqref{eq:LBqWithp_i}, one can easily see that the second is the dominating one. 
Thus, taking $p_4 \downarrow 0$,  from \eqref{eq:LBqWithp_i} we  can  deduce the validity of Proposition \ref{prop:allAlphaUBp}.
\end{proof}


\begin{proof}[Proof of Proposition \ref{prop:qLBkDepOnd}]

\textit{(a)} Assume $\varphi(d) \in o(d^{1-\alpha})$ as $ d\rightarrow \infty$. Then for any fixed choice of $p_1, \ldots, p_4$,
as $d \rightarrow \infty$  the last term on the right-hand side of \eqref{eq:LBqWithp_i} is the minimal one
 and thus determines the lower bound for the critical probability given in \eqref{eq:LBqWithp_i}. For every $\varepsilon >0$, choosing $p_1 = 1/2, p_2=p_3=\varepsilon/2$ and $p_4 = 1/2-\varepsilon$, we get
\begin{align*}
 \liminf_{d \rightarrow \infty} q_L(\alpha, d, \varphi(d)) d\geq \frac{1}{2^2}\left(\frac{1}{2}-\varepsilon\right).
\end{align*}
Since this is true for any $\varepsilon >0$, the claim follows.

\textit{(b)} Now consider the case that
for some $c \in [0,1]$ and $\alpha >0$
one has $\varphi(d) \sim c d^{1-\alpha}$ as $d\rightarrow \infty$. Then the second and third term on the right-hand side of \eqref{eq:LBqWithp_i} are asymptotically equivalent and smaller than 
the first term. Hence, they dictate the bound. The claim then holds for any feasible choice of $p_1,\ldots, p_4$ and
\begin{align*}
 C(\alpha,c):= \min\left\{p_1 \left(\frac{p_3}{c}\right)^{\frac{1}{1-\alpha}}, \frac{1}{2}\frac{p_1 p_4}{1-c}\right\}.
\end{align*}
For $\alpha = 0$ we have to take into consideration all three terms of the right-hand side of \eqref{eq:LBqWithp_i}, and thus obtain the claim with 
\begin{align*}
 C(0,c):= \min\left\{\frac{p_1 \sqrt{p_2p_3}}{c} , p_1 \frac{p_3}{c}, \frac{1}{2}\frac{p_1 p_4}{1-c}\right\}.
\end{align*}

\textit{(c)} Now assume that for some $c \in (0,1]$ one has
 $\varphi(d) \sim c d$ as $d\rightarrow \infty$. In this case, the second term on the right-hand side of
\eqref{eq:LBqWithp_i} is  the asymptotically decisive  contribution. Again, for any $\varepsilon >0$, choosing
\begin{align*}
 p_1 = \frac{1-\alpha}{2-\alpha} - 2\varepsilon, \quad p_2 = p_4 = \varepsilon, \quad \text{and} \quad  p_3 = 1-\frac{1-\alpha}{2-\alpha} = \frac{1}{2-\alpha}
\end{align*}
yields
\begin{align*}
 \liminf_{d \rightarrow \infty} q_L(\alpha, d, \varphi(d)) d^{\frac{1}{1-\alpha}} \geq \left(\frac{1-\alpha}{2-\alpha} - 2\varepsilon\right)\left(\frac{1}{2-\alpha}\frac{1}{c}\right)^{\frac{1}{1-\alpha}}.
\end{align*}
Since $\varepsilon$ was arbitrary, 
\begin{align*}
 \liminf_{d \rightarrow \infty} q_L(\alpha, d, \varphi(d)) d^{\frac{1}{1-\alpha}} &  \geq \frac{1-\alpha}{2-\alpha}\left(\frac{1}{2-\alpha}\frac{1}{c}\right)^{\frac{1}{1-\alpha}}\\
										  & = (1-\alpha)\left(1-\frac{1-\alpha}{2-\alpha} \right)^{\frac{2-\alpha}{1-\alpha}}\left(\frac{1}{c}\right)^{\frac{1}{1-\alpha}} \geq (1-\alpha)\frac{1}{4}\left(\frac{1}{c}\right)^{\frac{1}{1-\alpha}}.
\end{align*}
\end{proof}


The next step is to prove Proposition \ref{prop:lbq_alpha}.
\begin{proof}[Proof of Proposition \ref{prop:lbq_alpha}]
 We will again want to apply Lemma \ref{lem:surfaceExistence}. In order to derive an upper bound for the expectation in \eqref{eq:summability}, instead of 
 directly looking at $\lambda$-paths, we will consider a coarse-grained version of them and estimate the probability of these paths reaching a certain height. 
  The reason for coarse-graining is the following: if $q$ is approximately equal to $q_L(\alpha,d,k)$, then an admissible $\lambda$-path starting in 0 (say) will on average pick up at most $1-\alpha$ closed sites per horizontal step and if $q$ is slightly above  $q_L(\alpha,d,k)$, then such a path will certainly exist. When $\alpha$ is very close to one, then  the average number of sites which such a path visits between two successive visits of closed sites
will be of the order $(1-\alpha)^{-1}$  (which is large). If $d \ge 2$, then there will automatically be lots of admissible $\lambda$-paths visiting exactly the same closed sites (in the same order) but taking different routes in between
successive visits to closed sites, the factor increasing to infinity as $\alpha$ approaches 1. This means that estimating the probability that there exists an admissible $\lambda$-path (with a certain property) by the expected number of
such paths (via Markov's inequality) becomes very poor when $\alpha$ is close to 1. Therefore, we will define larger boxes in $\Z^{d+1}$ and define equivalence classes of paths by just observing the sequence of larger boxes they visit. The boxes will then be tuned such that the number of closed sites inside a box is of order one. 

 Recall that w.l.o.g. we assume $\eta_i \in \{0,1\}$, $i = 1, \ldots, d$. To facilitate
 reading, we have structured the proof into three steps.
 
 \noindent \textit{\uline{Step 1:} Coarse-grained $\lambda$-paths.} In order to define the abovementioned paths we partition $\Z^{d+1}$ by dividing $\R^{d+1}$ into boxes as illustrated in Figure \ref{fig:only}:
Define 
\begin{align*}
 B^{\alpha,d,\eta}_0:= \Big\{ r \in \R^{d+1} \, \mid \,& \forall i =1, \ldots, d:\; \big ( 
 \eta_i = 0 \Rightarrow r_i \in [0,1) \big) \land \big(\eta_i \neq 0 \Rightarrow  r_i \in [0, ({1-\alpha})^{-1} ) \big),\\
					      &  \qquad \qquad r_{d+1} \in \Big( \alpha \sum_{i=1}^d \eta_ir_i -1,\alpha \sum_{i=1}^d \eta_ir_i\Big]\Big\}
\end{align*}
and likewise for $a \in \Z^{d+1}$ set $B^{\alpha,d,\eta}_a:= B^{\alpha,d,\eta}_0 + v(a)$, where
\begin{align*}
 v(a):	& = \sum_{i\,:\, \eta_i=0} a_ie_i + \sum_{i\,:\, \eta_i\neq0} a_i\frac{1}{1-\alpha}(e_i + \alpha\eta_ie_{d+1}) + a_{d+1}e_{d+1}\\
	& = \sum_{i\,:\, \eta_i=0} a_ie_i + \sum_{i\,:\, \eta_i\neq0} a_i\frac{1}{1-\alpha}e_i + \left(\sum_{i \, : \, \eta_i\neq0}a_i\frac{\alpha}{1-\alpha}\eta_i + a_{d+1}\right)e_{d+1}.
\end{align*}
\begin{SCfigure} \label{fig:only}
\includegraphics[width=.7\textwidth]{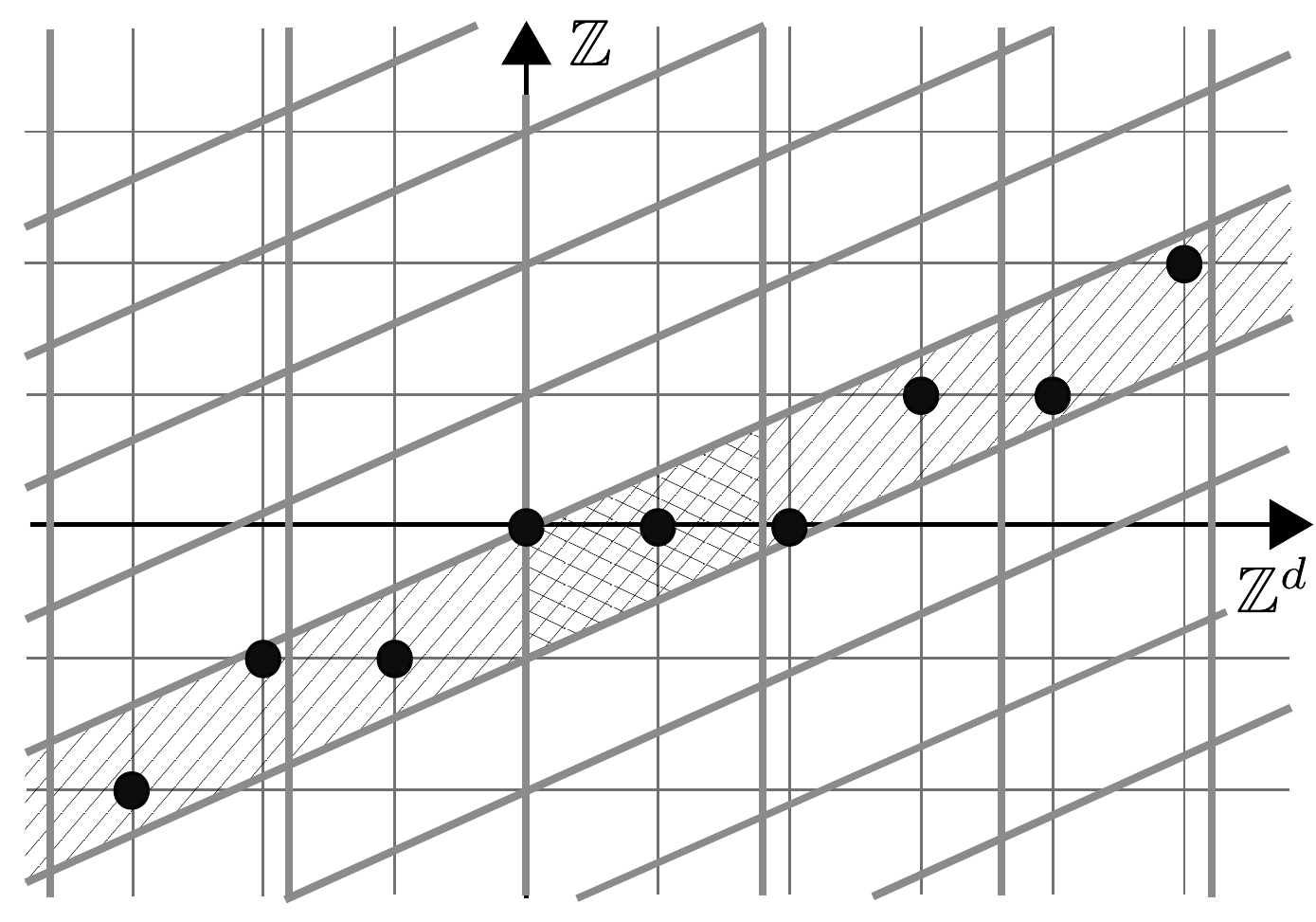}
\caption{$L^{\alpha,d}_{\eta}$ is marked by the black dots and the corresponding coarse-grained boxes are hatched. $B^{\alpha,d,\eta}_0$ is double hatched.}
\end{SCfigure}
Note that these boxes are translations of $B^{\alpha,d,\eta}_0$ shifted either in the direction of $e_{d+1}$ or parallel to the inclination of $L^{\alpha,d}_{\eta}$ and are such that $\Z^{d+1}=\bigcup_{a \in \Z^{d+1}} \big( B^{\alpha,d,\eta}_a \cap \Z^{d+1}\big),$ where the union is over disjoint sets. For any $y\in \Z^d$ the coordinates of the box it is contained in are given by $a(y)\in \Z^{d+1}$ as
\begin{align*}
 a_i(y):=\begin{cases}
          y_i,	& i=1,\ldots,d, \eta_i =0,\\
          \lfloor (1-\alpha)y_i\rfloor,	& i=1,\ldots,d, \eta_i \neq0,\\
          y_{d+1}-\lfloor \alpha \sum_{i=1}^d\eta_iy_i\rfloor,	& i=d+1.
         \end{cases}
\end{align*}
We will refer to these as the \emph{coarse-grained coordinates}. Note that they describe the position of the boxes relative to $L^{\alpha,d}_{\eta}$. Note that for $y \in \Z^d$ the $(d+1)$-st coordinate of its coarse-grained coordinates $a(y)$ gives its height (or distance in the 
$(d+1)$-st coordinate) relative to $L^{\alpha, d}_\eta$.
Since $\alpha,d$ and $\eta$ are fixed for this proof, we will often drop the superscripts for the sake
of better readability. With the above partition
of $\Z^{d+1}$ at hand,
 we can now define coarse-grained $\lambda$-paths.
 A \emph{coarse-grained $\lambda$-path} is any path that takes values in $\bigcup_{a \in \Z^{d+1}} \{B_a\},$
such that it can go from $B_a$ to  $B_{a'}$ in one  time step if and only if 
\begin{align}
 a'-a \in 	&\; \{ e_{d+1}\} \cup \{-\eta_ie_i\mid i=1,\ldots,d, \eta_i\neq0 \} \label{eq:cgsteps}\\
		&  \qquad\cup \{ - e_{d+1}\}\cup \{\pm e_i - e_{d+1}\mid i=1,\ldots,d\} \cup \{\eta_ie_i -2e_{d+1} \mid i=1,\ldots,d, \eta_i\neq0 \}. \notag
\end{align}
In particular, if we sample a standard $\lambda$-path only on the boxes $\{B_a\},$ $a \in \Z^{d+1}$, it visits,
then this supplies us with a coarse-grained $\lambda$-path (however, there might be coarse-grained $\lambda$-paths that
cannot be obtained by this sampling procedure).
We call a box $B_{a}$  \emph{closed} (with respect to $\omega$) if and only if $\omega(x) = 0$ for at least one $x \in B_a.$
Similarly to the case of $\lambda$-paths, we will call a coarse-grained $\lambda$-path \emph{admissible} if for each of its 
\emph{upward steps}, i.e., those steps for which $a'-a = e_{d+1}$, the box $B_{a'}$ 
is closed. Now since the above sampling procedure maps admissible $\lambda$-paths to
admissible coarse-grained $\lambda$-paths, the existence of an admissible $\lambda$-path from some $x \in \Z^{d+1}$ to $y\in \Z^{d+1}$ implies the existence of an admissible coarse-grained $\lambda$-path from $B_{a(x)}$ to $B_{a(y)}$.  We
therefore investigate the behavior of these coarse-grained $\lambda$-paths more closely.

\noindent \textit{ \uline{Step 2:} An estimate for coarse-grained $\lambda$-paths.}
 Recalling \eqref{eq:cgsteps}, note that there is only one kind of step in a coarse-grained $\lambda$-path that will not 
 change its height relative to $L_\eta^{\alpha, d}$, i.e., its coarse-grained coordinate in the $(d+1)$-st dimension, namely those of the form $ -\eta_ie_i$ with $i$ such that $\eta_i\neq0$. Use $\mathsf{CG}(M)$ to denote the set of all coarse-grained $\lambda$-paths starting with $B_0$ of length $M\in \N$ whose endpoint, i.e. its last box, is above or intersects
 $L^{\alpha,d}_{\eta}$. For  $\pi \in \mathsf{CG}(M)$, use $U=U(\pi)$ to denote the number of its \lq up\rq\,-steps, i.e., those steps that increase the $(d+1)$-st coarse-grained coordinate. Similarly, use $D=D(\pi)$ to denote the number of steps that decrease the $(d+1)$-st
  coarse-grained coordinate (possibly by more than 1) and $D^i_0=D^i_0(\pi)$ the number of steps in each dimension $i=1,\ldots,d$, that do \emph{not} alter the 
  $(d+1)$-st coarse-grained coordinate. Due to the natural restrictions on the movements, $D^i_0 = 0$ for any $i$ such that $\eta_i =0$. We can now make the following observation: In order for $\pi$ to end in a box above or intersecting
  $L^{\alpha,d}_{\eta}$, we necessarily have 
$$
U \geq D.
$$
 In addition, observe that due to the length of the boxes in the corresponding directions being $1/(1-\alpha)$, between two steps of type $D^i_0$ (for the same $i$) there needs to be at least one step of type $D$ or $U$ (not $D^j_0, j\neq i$). This implies that 
 $$
 D^i_0 \leq D+U+1.
 $$
  Therefore, for a coarse-grained $\lambda$-path $\pi \in \mathsf{CG}(M)$, recalling that it ends above 
  or intersecting $L^{\alpha,d}_{\eta}$,
\begin{align} \label{eq:estU}
\begin{split}
 M 	& = U + D + \sum_{i=1}^{d} D^i_0  \leq 2U + \Vert \eta\Vert_1 (2 U + 1) = 2U(k+1) + k \\ 
 \Longleftrightarrow \qquad U	& \geq \frac{M-k}{2(k+1)}.
  \end{split}
\end{align}
Thus, we will now  estimate the probability of the event on the right-hand side in the above display. Write $m(\pi)$ for the number of \emph{distinct} boxes visited by a path $\pi \in \mathsf{CG}(M)$. Then the exponential Chebychev inequality yields for any $\beta >0$ and 
 $\gamma \in (0,1)$ that
 
\begin{align} \label{eq:EstClosedSitesInPath}
\P_p(& \text{there exists }  \pi \in \mathsf{CG}(M) 
\text{ whose boxes contain at least } \gamma M \text{ closed sites})\notag\\
	& \leq \sum_{ \pi \in \mathsf{CG}(M)} \P_p(\text{boxes of $\pi$ contain at least } \gamma M \text{ closed sites})\notag\\
	& \leq \sum_{ \pi \in \mathsf{CG}(M)} \frac{1}{\exp(\beta \gamma M)} \E_p[\exp(\beta ( \# \text{ of closed sites in boxes of } \pi))] \notag\\
	& = \sum_{ \pi \in \mathsf{CG}(M)} \frac{1}{\exp(\beta \gamma M)} \E_p[\exp(\beta ( \# \text{ of closed sites in $m(\pi)$ distinct boxes}))] \notag\\ 
	& = \sum_{ \pi \in \mathsf{CG}(M)} \frac{1}{\exp(\beta \gamma M)} (\E_p[\exp(\beta ( \# \text{ of closed sites in } B_0))])^{m(\pi)} \notag\\
	& = \sum_{ \pi \in \mathsf{CG}(M)} \frac{1}{\exp(\beta \gamma M)} (\exp(\beta)q + (1-q))^{\lceil \frac{1}{1-\alpha}\rceil^km(\pi)} \notag\\
	& \leq \sum_{ \pi \in \mathsf{CG}(M)} \frac{1}{\exp(\beta \gamma M)} (\exp(\beta)q + (1-q))^{\lceil \frac{1}{1-\alpha}\rceil^kM} \notag\\
	& \leq (2(2d+1))^M \frac{1}{\exp(\beta \gamma M)} (\underbrace{\exp(\beta)q + (1-q)}_{ \leq \exp(q (\exp(\beta)-1))})^{\lceil \frac{1}{1-\alpha}\rceil^kM} \notag\\
	& \leq \exp \Big (M \Big (\log(4d+2)- \beta\gamma + q(\exp(\beta)-1)\left(\frac{2-\alpha}{1-\alpha}\right)^k \Big ) \Big),
\end{align}
where in the penultimate inequality we estimated the total number of coarse-grained $\lambda$-paths of length $M$ by $(2(2d+1))^M.$
Observe that, choosing $\beta = \frac{1+\epsilon}{\gamma}\log(4d+2)$ for some $\epsilon >0$ the expression inside the exponential is negative if, and only if, 
\begin{align}\label{eq:uglyboundonq}
  -\epsilon\log(4d+2) +& q(\exp(\frac{1+\epsilon}{\gamma}\log(4d+2))-1)  \left(\frac{2-\alpha}{1-\alpha}\right)^k  < 0 \notag\\
    \Leftrightarrow \qquad q & < \frac{\epsilon \log(4d+2)}{\exp((1+\epsilon)\gamma^{-1}\log(4d+2))-1}\left(\frac{1-\alpha}{2-\alpha}\right)^k.
\end{align}

\noindent\textit{\uline{Step 3:} Returning to $\lambda$-paths.}
In order to apply Lemma \ref{lem:surfaceExistence} we need to estimate the probability of reaching a site $y \in L^{\alpha,d}_{\eta}(h)$ with an admissible $\lambda$-path. Recall that coarse-grained  $\lambda$-paths were defined in such a way that
 the existence of an admissible $\lambda$-path from $0 \in \Z^{d+1}$ to $y \in \Z^{d+1}$ implies the existence of an admissible coarse-grained $\lambda$-path from $B_0$ to $B_{a(y)}$. This path then has length $M$ at least $\Vert a(y) \Vert_1$ and thus
\begin{align*}
 M 	& \geq \Vert a(y) \Vert_1 \\
	& \geq \sum_{i=1}^d \vert a_i(y)\vert + h\\
	& = \sum_{i\,:\, \eta_i=0} \vert y_i \vert + \sum_{i\,:\, \eta_i \neq 0} \vert \lfloor(1-\alpha)y_i\rfloor \vert +h\\
	& \geq \sum_{i\,:\, \eta_i=0} \vert y_i \vert + \sum_{i\,:\, \eta_i \neq 0} ((1-\alpha)\vert y_i \vert - 1) +h\\ 
	& \geq (1-\alpha)\Vert \bar y \Vert_1 - k + h.
\end{align*}
Therefore, for any $h \in \N$ and $y \in L^{\alpha,d}_{\eta}(h)$ using \eqref{eq:estU} in the third step,
\begin{align*}
 \P_p(&0 \rightarrowtail y)	 \leq \P_p(\text{there exists an admissible coarse-grained $\lambda$-path from $B_0$ to $B_{a(y)}$})\\
				& \leq \P_p(\text{there exists } \pi \in \mathsf{CG}( (1-\alpha)\Vert \bar y \Vert_1 - k + h ) \text{ admissible})\\
				& \leq \P_p(\text{there exists } \pi \in \mathsf{CG}( (1-\alpha)\Vert \bar y \Vert_1 - k + h ) \\
				& \qquad\quad \text{ whose boxes contain at least } \frac{(1-\alpha)\Vert \bar y \Vert_1 - k + h-k}{2(k+1)} \text{ closed sites})\\
				& \leq  \exp\Big(\big((1-\alpha)\Vert \bar y \Vert_1 - k + h\big)\\
				& \qquad \qquad \quad \times \big(-\epsilon\log(4d+2) + q(\exp((1+\epsilon)4(k+1)\log(4d+2))-1) \left(\frac{2-\alpha}{1-\alpha}\right)^k\big)\Big),
\end{align*}
where
 we choose $h \geq 3k$ and set $\gamma := \frac{1}{4(k+1)}$ to apply \eqref{eq:EstClosedSitesInPath} for the last inequality. Assuming 
\begin{align*}
 q  < \underbrace{\frac{\epsilon \log(4d+2)}{\exp((1+\epsilon)4(k+1)\log(4d+2))-1}\frac{1}{2^k}}_{=:C(k,d,\epsilon)}(1-\alpha)^k
\end{align*}
\eqref{eq:uglyboundonq} holds and combining the observations above we can estimate \eqref{eq:summability} by 
 \begin{align*}
 \sum_{y \in L^{\alpha,d}_{\eta}(h)}  \P_p(0 \rightarrowtail y)	& \leq \sum_{y \in L^{\alpha,d}_{\eta}(h)} \exp
 \Big( ((1-\alpha)\Vert \bar y \Vert_1 - k + h)\\
								& \qquad \qquad \times (\underbrace{-\epsilon\log(4d+2) + q(\exp(\frac{1+\epsilon}{\gamma}\log(4d+2)-1) \lceil\frac{1}{1-\alpha}\rceil^k}_{=:\bar c(k,d,\epsilon,\alpha, q)=\bar c < 0 }) \Big) \\
								& = \exp((-k+h)\bar c)\sum_{y \in L^{\alpha,d}_{\eta}(h)}\exp((1-\alpha)\Vert \bar y \Vert_1\bar c)\\
								& \leq \exp((-k+h)\bar c)\underbrace{\sum_{i=1}^{\infty}\exp((1-\alpha)i\bar c)(2d+1)^i}_{<\infty}.
 \end{align*}
Thus
\begin{align*}
  \lim_{h\rightarrow \infty} \E_p[\vert \mathcal L^{\alpha,d}_{\eta}\vert ] = \lim_{h\rightarrow \infty} \sum_{y \in L^{\alpha,d}_{\eta}(h)}  \P_p(0 \rightarrowtail y) = 0.
\end{align*}
Therefore, the assumptions of Lemma \ref{lem:surfaceExistence} hold which implies the existence of an open Lipschitz surface. Hence,
\begin{align*}
 q_L(\alpha,k,d) \geq C(k,d,\epsilon)(1-\alpha)^k.
\end{align*}
Note that for our result, any $\varepsilon >0$ is sufficient. However, the optimal $\varepsilon$ is given by $\varepsilon = \frac{1 + h}{4(k+1)\log(4d+2)}$, where $h$ is such that $-\exp(-1-4(k+1)\log(4d+2))=h\exp(h)$.
\end{proof}


\begin{proof}[Proof of Proposition \ref{prop:D1}]

In order to prove the lower bound for $q_L(\alpha,1,1)$ we show the existence of an open Lipschitz surface for sufficiently small $q$ by analyzing the existence of an admissible $\lambda$-path starting in $L^{\alpha,1}_{(1)}$ reaching the site $(0,h)$ for large $h \in \N_0$. Writing $x \overset{A}{\rightarrowtail} y$ for the event of existence of an admissible $\lambda$-path from $x \in \Z^2$ to $y \in \Z^2$ that only uses sites in the set $A \subseteq \Z^2$, we observe that
\begin{align}\label{eq:est1}
 \P_p(L^{\alpha,1}_{(1)} \rightarrowtail (0,h)) & = \P_p(L^{\alpha,1}_{(1)} \overset{L^{\alpha,1}_{(1), \geq}}{\rightarrowtail} (0,h)) \leq 2 \P_p\big(\bigcup_{n \in \N_0}\big\{(n, \floor{\alpha n}) \overset{L^{\alpha,1}_{(1), \geq}}{\rightarrowtail} (0,h)\big\}\big) \notag \\
						& \leq 2 \sum_{n =0}^{\infty}\P_p((n, \floor{\alpha n}) \overset{L^{\alpha,1}_{(1), \geq}}{\rightarrowtail} (0,h)),
\end{align}
for any $h \in \N_0$. Therefore we need to find suitable upper bounds for the summands. 

A first helpful bound, albeit without the restriction on the space, can be obtained similarly to \eqref{eq:expNrPathUB}. Observe that any $\lambda$-path from $(n, \floor{\alpha n})$ to $(0,h)$ must have made a total of $4k + \ceil{(2-\alpha)n} + h$ steps for some $k \in \N_0$: $n+k$ to the downward left, $k$ to the downward right and $n-\floor{\alpha n} + h + 2k$ upwards. Then, counting the number of admissible $\lambda$-paths under consideration
\begin{align}\label{eq:genBound}
 \P_p((n, \floor{\alpha n}) \overset{L^{\alpha,1}_{(1), \geq}}{\rightarrowtail} (0,h)) 	& \leq  \P_p((n, \floor{\alpha n}) \rightarrowtail (0,h))\\
											&\leq \sum_{k \in \N_0} \binom{2n + h - \floor{\alpha n} + 4k}{n+k, k, n-\floor{\alpha n} + h + 2k} q^{n-\floor{\alpha n} + h + 2k}.
\end{align} 
This upper bounds the terms for small $n$ in \eqref{eq:est1}, but can also be used to obtain an adequate estimate for large $n$. This is, however, more elaborate: For $n \in \N_0$ define
\begin{align*}
 A_n & := \{-n, -(n-1), \ldots, -1, 0, 1, \ldots \}\times \Z,\\
 Y_n & := \max\{ r \in \Z \mid (0,0) \overset{A_n}{\rightarrowtail }(-n,r) \}.
\end{align*}
$Y_n$ is the height of the highest site above $-n$ reachable by an admissible $\lambda$ path started in $0$ under the restriction of using only the sites in $A_n$. Now note that denoting by $\bar Y_0$ a copy of $Y_0$, independent of $(Y_n)_{n \in \N_0}$,
\begin{align} \label{eq:StDom}
 \bar Y_0 \text{ stochastically dominates } Y_{n+1} - (Y_n -1) \text{ under } \P_p(\cdot\mid Y_i, i \leq n)
\end{align}
since the conditioning can be seen as discarding those paths in the construction using any site below the $Y_i, i \leq n$. Therefore,
 a closer study of the distribution of $\bar Y_0$ seems advisable. Using \eqref{eq:genBound},
\begin{align} \label{eq:YdistrUB}
\begin{split}
 \P_p(\bar Y_0 \geq m) 	& \leq \P_p((0,0) \rightarrowtail (0,m)) \\
			& \leq q^m + \sum_{k \in \N}3^{ m + 4k}q^{m + 2k} \\
			& \leq q^m + (3q)^m \frac{(9q)^2}{1-(9q)^2},
\end{split}
\end{align}
for $q < 1/9$. Hence, we can upper bound the expectation
\begin{align*}
 \E_p[\bar Y_0]	& \leq q + \sum_{m = 2}^{\infty} q^m + \frac{(9q)^2}{1-(9q)^2}\sum_{m =1}^{\infty}(3q)^m \\
		& \leq q + Cq^2
\end{align*}
for a suitable $C>0$ and small $q$. As a consequence, assuming $q$ sufficiently small for 
\begin{align}\label{eq:condq}
q + Cq^2 - 1< -\alpha 
\end{align}
 to hold, \eqref{eq:StDom} and a large deviation principle (the required exponential moments exist due to \eqref{eq:YdistrUB})
 yield the existence of $c_1, c_2>0$ such that
\begin{align*}
 \P_p( Y_n \geq -\alpha n)	& \leq c_1\exp( - n c_2).
\end{align*}
Observe that an admissible $\lambda$ path started in some $(n, \floor{\alpha n})$ and reaching $\{0\}\times\N_0$ going only through $L^{\alpha,1}_{(1), \geq}$ has only used sites to right of $\{0\}\times\Z$ until the first time it hits $\{0\}\times\N_0$. Hence,
\begin{align*}
 \P_p((n, \floor{\alpha n}) \overset{L^{\alpha,1}_{(1), \geq}}{\rightarrowtail} (0,h)) & \leq \P_p((n, \floor{\alpha n}) \overset{L^{\alpha,1}_{(1), \geq}}{\rightarrowtail} \{0\}\times \N_0)\\
		      & \leq \P_p(Y_n \geq -n \alpha) \leq c_1\exp( - n c_2).
\end{align*}
This is the last component needed to estimate \eqref{eq:est1} as it allows us to choose $N \in \N$ such that for any $h \in \N$
\begin{align*}
 \sum_{n = N}^{\infty} \P_p((n, \floor{\alpha n}) \overset{L^{\alpha,1}_{(1),\geq}}{\rightarrowtail} (0,h) )\leq \frac{1}{8}.
 \end{align*}
On the other hand, using \eqref{eq:genBound} again, we may now choose $H$ sufficiently large such that for all $h \geq H,$ 
\begin{align*}
 \sum_{n = 0}^{N-1} \P_p((n, \floor{\alpha n}) \overset{L^{\alpha,1}_{(1),\geq}}{\rightarrowtail} (0,h) )\leq \frac{1}{8}.
\end{align*}
Hence, by \eqref{eq:est1} choosing $q$ as in \refeq{eq:condq} implies
\begin{align*}
 \P_p(L^{\alpha,1}_{(1)} \rightarrowtail (0,h)) \leq \frac{1}{2}
\end{align*}
for all $h \geq H$ and thus $q < q_L(\alpha, 1, 1)$. 

The corresponding upper bound is already given by Proposition \ref{prop:UBqAsympAlpha}.
\end{proof}

\subsection{Upper Bounds for $q_L(\alpha, d, k)$} 

It will be useful in this section to consider what we call reversed $\lambda$-paths. A sequence of sites $x_0, x_1, \ldots, x_n \in \Z^{d+1}$ is called an (admissible) \emph{reversed $\lambda$-path}, if $x_n, x_{n-1},x_{n-2}, \ldots, x_0$ is an (admissible) $\lambda$-path in the sense of Definition \ref{def:adLamPa}. 

Furthermore, the proof of Proposition \ref{prop:UBqAsympD} will take advantage of a comparison to so-called $\rho$-percolation, see e.g. \cite{MeZu-93} and \cite{KeSu-00}. Here the setting is that of oriented site-percolation in $\Z^d$, i.e., where in addition to our standard setting
of Bernoulli site percolation we assume the nearest neighbor edges of $\Z^d$ to be oriented in the direction of the positive coordinate vectors
(which is the sense of orientation for the rest of this section). We say that \emph{$\rho$-percolation occurs} for $\omega \in \{0,1\}^{\Z^d}$ if there exists an oriented nearest neighbor path $0=\bar x_0, \bar x_1, \ldots$ in $\Z^d$ starting in the origin, such that 
\begin{align*}
 \liminf_{n \rightarrow \infty} \frac{1}{n} \sum_{i=1}^n (1-\omega(\bar x_i)) \geq \rho.
\end{align*}
Any such path is called a \emph{$\rho$-path}.
The probability of the existence of such a path exhibits a phase transition in the parameter $q$ and the corresponding critical probability is denoted by $q_c(\rho, d)$. Theorem 2 in \cite{KeSu-00} states that for every $\rho \in (0, 1]$,
\begin{align} \label{eq:RhoConst}
 \lim_{d \rightarrow \infty} d^{\frac{1}{\rho}}q_c(\rho, d) = \frac{\theta^\frac{1}{\rho}}{e^{\theta} -1} =:R(\rho),
\end{align}
where $\theta$ is the unique solution to $\theta e^{\theta}/(e^{\theta} -1) = 1/\rho$, and $R(1)=1$. Note that we have interchanged the role of `open' and `closed' (and thus $p$ and $q$) with respect to \cite{KeSu-00} in order to adapt the result to its application in our proof. 

Before turning to the proof of Proposition \ref{prop:UBqAsympD}, we observe a useful property of the critical probability of $\rho$-percolation.


\begin{lemma}[Continuity of $q_c$]\label{lem:rho}
 The critical probability of $\rho$-percolation is continuous in $\rho$, i.e. for any $d \in \N$ the map 
 \begin{align} \label{eq:qcCont}
[0,1) \ni  \rho \mapsto q_c(\rho, d)
 \end{align}
is continuous.
\end{lemma}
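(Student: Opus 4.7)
My plan is to reformulate $q_c(\cdot, d)$ as the generalized inverse of a deterministic, nondecreasing function $L$, and then show that $L$ has no flat pieces below $1$, which suffices for continuity of the inverse. To this end, for $q \in [0,1)$ and $\omega \in \{0,1\}^{\Z^d}$ I would set
$$L(q, \omega) := \sup_{\gamma} \liminf_{n \to \infty} \frac{1}{n}\sum_{i=1}^n \bigl(1-\omega(\bar x_i)\bigr),$$
where the supremum is over oriented nearest-neighbor paths $\gamma = (0 = \bar x_0, \bar x_1, \ldots)$ starting at the origin. Since any $\liminf$ is invariant under modifications of $\omega$ at finitely many sites, the event $\{L(q,\omega) \geq t\}$ is tail-measurable, and Kolmogorov's $0$--$1$ law guarantees that $L(q, \cdot)$ is $\P_q$-a.s.\ equal to some constant $L(q)$. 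A standard monotone coupling shows that $q \mapsto L(q)$ is nondecreasing. The definition of $q_c$ together with this a.s.\ constancy yields the sandwich
$$\sup\{q : L(q) < \rho\} \leq q_c(\rho, d) \leq \inf\{q : L(q) > \rho\},$$
so, once $L$ is shown to be strictly increasing on $\{q : L(q) < 1\}$, both bounds collapse to the left-continuous generalized inverse $L^{-1}(\rho)$, which is continuous precisely when $L$ has no flat pieces.

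The core of the argument is thus to prove that $L$ is strictly increasing on $\{q : L(q) < 1\}$. Given $q_1 < q_2 < 1$ with $L(q_1) < 1$, I would use the monotone coupling $\omega_{q_2}(x) := \omega_{q_1}(x)\,\xi(x)$, where $(\xi(x))_{x \in \Z^d}$ is i.i.d.\ Bernoulli with parameter $(1-q_2)/(1-q_1)$, independent of $\omega_{q_1}$; this makes $\omega_{q_2}$ i.i.d.\ Bernoulli with parameter $1-q_2$. For each $\varepsilon > 0$, using a measurable selection theorem (e.g.\ Kuratowski--Ryll-Nardzewski), I would pick an $\omega_{q_1}$-measurable path $\gamma$ whose asymptotic density of $\omega_{q_1}$-closed sites is at least $L(q_1)-\varepsilon$. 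Conditionally on $\omega_{q_1}$, the variables $\xi(\bar x_i(\omega_{q_1}))$ are i.i.d.\ Bernoulli, since the $\bar x_i$ are distinct and $\xi$ is independent of $\omega_{q_1}$; the conditional strong law of large numbers then shows that the asymptotic density of $\omega_{q_2}$-closed sites along $\gamma$ is $\P_{q_1}$-a.s.\ at least
$$L(q_1) + \tfrac{q_2-q_1}{1-q_1}\bigl(1 - L(q_1)\bigr) - O(\varepsilon).$$
Letting $\varepsilon \downarrow 0$ yields $L(q_2) \geq L(q_1) + \tfrac{q_2-q_1}{1-q_1}(1-L(q_1)) > L(q_1)$.

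Combining these observations, $L$ is strictly increasing on $\{L < 1\}$, so $q_c(\cdot, d) = L^{-1}$ is continuous on $[0,1)$: any jump of $q_c$ would correspond to a flat piece of $L$ at some level $< 1$, which is excluded. The flat piece of $L$ at level $1$ (appearing above the oriented site-percolation threshold) only induces a constant segment of $q_c(\cdot, d)$ near $\rho = 1$, not a jump. The main obstacle I foresee is making the measurable selection of the near-optimal path $\gamma$ rigorous and then carrying out the conditional SLLN along this random path; this hinges on $\gamma$ depending only on $\omega_{q_1}$, which is independent of $\xi$. If measurable selection proves cumbersome, an alternative is to work with the finite-horizon maximum $M_n(q) := n^{-1}\max_{|\gamma|=n}\sum_{i=1}^n(1-\omega(\bar x_i))$, identify $L(q) = \lim_n \E_q[M_n(q)]$ via Kingman's subadditive ergodic theorem, and carry out the coupling argument at finite $n$ before passing to the limit.
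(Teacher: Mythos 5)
Your proposal is correct and follows essentially the same route as the paper: both reduce continuity of $q_c(\cdot,d)$ to strict monotonicity of the inverse function $q\mapsto\rho_c(q)$ (your $L(q)$, the paper's $\gamma(q)$ obtained via Kingman's subadditive ergodic theorem), and both prove that strict monotonicity by a monotone coupling along a (near-)optimal path, arriving at the identical inequality $L(q_2)-L(q_1)\ge\frac{q_2-q_1}{1-q_1}\bigl(1-L(q_1)\bigr)$. The measurable-selection issue you flag is exactly what the paper sidesteps by working with finite-horizon optimal paths (chosen canonically by lexicographic order) and conditional expectations given $\mathcal F_q$ before passing to the $L^1$ limit, i.e., your stated fallback.
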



\begin{proof}[Proof of Lemma \ref{lem:rho}]
 Since $d$ is fixed and we only consider $\Z^d$ in this proof, the index is dropped for better readability. It is easy to see that the event of $\rho$-percolation also undergoes a phase-transition in $\rho$ (for fixed $q$) and thus we define
\begin{align*} 
 \rho_c(q):= \sup\{\rho \mid \P_{1-q}(\rho\text{-percolation occurs}) = 1\}.
\end{align*}
Note that strict monotonicity of $\rho_c(q)$ for $q \in [0,\bar q]$, where $\bar q := \sup\{ q\mid \rho_c(q) <1\}$, 
would imply the desired continuity of $q_c(\rho)$ on $[0,1)$. In order to prove this strict monotonicity, we will, however, first consider a different quantity: Still in the setting of oriented percolation in $\Z^d$, for any $\omega \in \{0,1\}^{\Z^d}$ 
let
\begin{align*}
 Y_{0,n}(\omega) := \max \Big\{\, r \in \N_0 \mid \exists  \text{ directed nearest neighbor path }& 0 = x_0, x_1, \ldots, x_{n}:\\
						      & \qquad \qquad \, \sum_{i=1}^{n} (1- \omega(x_i)) = r
						      \Big\},
\end{align*}
and denote by $\hat X_n$ the site with the lowest lexicographical order that is the endpoint of such a directed nearest neighbor path on which the value of $Y_{0,n}$ is attained. Then, for $m\geq n$  define
\begin{align*}
 Y_{n,m}(\omega) := \max \Big\{\, r \in \N_0 \mid \exists  \text{ directed nearest neighbor path }& \hat X_n = x_0, x_1, \ldots, x_{m-n}:\\
						      & \qquad \qquad \, \sum_{i=1}^{m-n} (1- \omega(x_i)) = r
						      \Big\}.
\end{align*}

By the Subadditive Ergodic Theorem (see e.g. \cite{Durrett}, Theorem 6.6.1)
 the sequence $(Y_{0,n}/n)_{n \in \N}$ converges $\P_{1-q}$-a.s. and in $L^1(\P_{1-q})$ to a (deterministic) limit that we denote by $\gamma(q)$. In fact, 
 \begin{equation} \label{eq:limitEquality}
\gamma (q) = \rho_c(q).
 \end{equation}
 To see this, fix $q \in (0,1)$ and choose $\rho < \rho_c(q)$. 
Then for $\P_{1-q}$-almost any $\omega \in \{0,1\}^{\Z^d}$ there exists an oriented nearest neighbor path $X_1(\omega), X_2(\omega), \ldots$ such that 
\begin{align*}
 \rho \leq \liminf_{n \rightarrow \infty} \frac{1}{n} \sum_{i=1}^n (1-\omega(X_i(\omega))).
\end{align*}
Since by definition $ \sum_{i=1}^n (1-\omega(X_i(\omega))) \leq Y_{0,n}(\omega)$
for $\P_{1-q}$-almost all $\omega\in \{0,1\}^{\Z^d}$ and $n \in \N$, taking the limes inferior on both sites gives $\rho \leq \gamma(q)$, which implies $\rho_c(q) \leq \gamma(q)$. To prove the converse inequality, choose, for any $\varepsilon >0$ an $N \in \N$ such that $\frac{1}{N}\E_{1-q}[Y_{0,N}] \geq \gamma(q)-\varepsilon$. For any $\omega \in \{0,1\}^{\Z^d}$ let $X_1(\omega),X_2(\omega),\ldots, X_N(\omega)$ be an (oriented nearest neighbor) path, such that $Y_{0,N} = \sum_{i=1}^{N} (1- \omega(X_i(\omega)))$. Using i.i.d. copies of $(X_1, \ldots, X_N)$, one can construct an infinite oriented nearest neighbor path $(\widetilde X_i)_{i \in \N_0}$ with the property that by the law of large numbers
\begin{align*}
 \lim_{n \rightarrow \infty} \frac{1}{n} \sum_{i=1}^n \widetilde X_i = \frac{1}{N}\E_{1-q}[Y_{0,N}] \geq \gamma(q)-\varepsilon \qquad \P_{1-q}\text{-a.s.}.
\end{align*}
Thus $\gamma(q) - \varepsilon \leq \rho_c(q)$ and since $\varepsilon$ was arbitrary, $\gamma(q) \leq \rho_c(q)$, which in combination with the above establishes \eqref{eq:limitEquality}.

The strict monotonicity of $\gamma(\cdot)$ (and thus $q_c(\cdot)$) can now be proven through a suitable coupling argument. Denote by $\mathcal U_{[0,1]}$ the uniform measure on the interval $[0,1]$ and define $\mu:= \mathcal U_{[0,1]}^{\otimes \Z^{d+1}}$ as the product measure on the space $\mathcal W := [0,1]^{\Z_d}$. For any $w \in \mathcal W$, $q \in (0,1)$ and $n \in \N_0$ define
\begin{align*}
 Y^q_n(w): = \max \Big \{r \in \N_0 \mid \exists  \text{ directed nearest neighbor path }& 0 = x_0, x_1, \ldots, x_{n}:\\
						      & \qquad \qquad \, \sum_{i=}^{n} \ind{1}_{[0,q]}(w(x_i)) = r
						      \Big \}.
\end{align*}
Observe that $\mathcal L_{\mu} ((Y^q_n)_{n \in \N_0}) = \mathcal L_{\P_{1-q}}((Y_{0,n})_{n \in \N_0})$, where $\mathcal L_{\nu}$ denotes the law with respect to the measure $\nu$. Therefore 
\begin{align*}
 \lim_{n \rightarrow \infty} \frac{1}{n} Y^q_n = \gamma(q) \qquad \mu\text{-a.s. and in } L^1(\mu).
\end{align*}
As before, for any $q \in (0,1)$, $w \in \mathcal W$ and $n \in \N_0$, let $X^{q,n}_1(w), \ldots, X^{q,n}_n(w)$ be an oriented nearest neighbor path such that $Y^q_n = \sum_{i=1}^n \ind{1}_{[0,q]}(w(X^{q,n}_i(w)))$. Choose $0\leq q < q' \leq \bar q$, then
\begin{align} \label{eq:DiffInY}
 Y^{q'}_n = \sum_{i=1}^n \ind{1}_{[0,q']}(w(X^{q',n}_i(w))) & \geq \sum_{i=1}^n \ind{1}_{[0,q']}(w(X^{q,n}_i(w))) \notag\\
							    & = Y^q_n + \sum_{i=1}^n \ind{1}_{[q,q']}(w(X^{q,n}_i(w))).
\end{align}
Set $\mathcal F_q := \sigma(w \mapsto \ind{1}_{[0,q]} (w(x)) \mid x \in \Z^d)$. Then, obviously, the $Y^q_n$ are $\mathcal F_q$-measurable and the $\ind{1}_{[0,q]}(w(X^{q,n}_i(w)))$, $1 \le i \le n,$ are independent given $\mathcal F_q$. In addition,
\begin{align*}
 \mu(\ind{1}_{[q,q']}(w(X^{q,n}_i(w))) = 1 \mid \mathcal F_q) = \frac{q'-q}{1-q} \ind{1}_{\{w(X^{q,n}_i(w))>q\}}.
\end{align*}
Thus using \eqref{eq:DiffInY} we obtain
\begin{align*}
 \E_{\mu}[Y^{q'}_n-Y^q_n \mid \mathcal F_q] \geq \E_{\mu}
 \Big [ \sum_{i=1}^n \ind{1}_{[q,q']}(w(X^{q,n}_i(w)))\mid \mathcal F_q \Big] = \left( n - Y^q_n\right)\frac{q'-q}{1-q},
\end{align*}
where $\E_{\mu}$ denotes the expectation with respect to $\mu$. Using the $L_1(\mu)$ convergence
\begin{align*}
 \gamma(q') - \gamma(q) & = \lim_{n \rightarrow \infty} \E_{\mu}
 \Big[ \E_{\mu} \Big[\frac{1}{n}\left(Y^{q'}_n - Y^q_n\right)\mid \mathcal F_q \Big ] \Big] \\
			& \geq \lim_{n \rightarrow \infty} \E_{\mu} \Big[ \frac{1}{n} \left(n-Y^q_n\right) \frac{q'-q}{1-q} \Big]\\
			& = (1-\gamma(q))\frac{q'-q}{1-q}
\end{align*}
and the right-hand side is positive, since $\gamma(q)=\rho_c(q) < 1$ for $q < \bar q$. This shows the strict monotonicity
of the function $\rho_c$  on  $[0,\overline q]$ and hence implies \eqref{eq:qcCont}.
\end{proof}


\begin{proof}[Proof of Proposition \ref{prop:UBqAsympD}]

We will compare $\rho$-paths in $\Z^d$ with reversed admissible $\lambda$-paths in $\Z^{d+1}$.
 To this end define for any $\omega \in \{0,1\}^{\Z^{d+1}}$ and $\bar x \in \Z^d$ the quantity
\begin{align*}
 H_{\omega}(\bar x): = \min\Big\{h \in \N_0 \mid \exists & \text{ an oriented 
 nearest neighbor path } 0=\bar x_0, \ldots, \bar x_m=\bar x \in \Z^d, \\
						     & \text{ and a sequence } 0=h_0, \ldots, h_m = h \in \N_0 \text{ s.t. } \\
						     & \qquad \qquad h_{i+1} = \begin{cases}
									    h_i, & \text{ if }\omega(\bar x_i, h_i) = 0,\\
									    h_i+1, & \text{ otherwise}.
									\end{cases}  \quad \Big\}.
\end{align*}
A second's thought reveals that this map is defined in such
a way that there is an admissible $\lambda$-path from $(\bar x, H_{\omega}(\bar x))$ to the origin,
 which takes advantage of many closed sites in the
 configuration $\omega$. (It is, however, not optimal, as it does not make use of consecutive \lq piled up\rq\ closed sites in one step.) With this we can then define a map $T : \{0,1\}^{\Z^{d+1}} \rightarrow \{0,1\}^{\Z^d}$ as
\begin{align*}
 (T(\omega)) (\bar x) := \begin{cases}
                     \omega(\bar x, H_{\omega}(\bar x)), & \text{ if } \bar x \in \N_0^d,\\
                     \omega(\bar x, 0), & \text{ otherwise.}
                    \end{cases}
\end{align*}
The purpose of $T$ is to map a configuration $\omega \in \{0,1\}^{\Z^{d+1}}$ to a configuration $\bar \omega \in \{0,1\}^{\Z^d}$, 
for which there exists an oriented path picking up almost as many closed sites as the oriented reversed admissible $\lambda$-path in $\omega$ with lowest $(d+1)$-st coordinate. In order to be more precise, we add an index to the probability measure used to indicate the space it is defined on. 
I.e., $\P_{p,d}$ will denote the Bernoulli product-measure on $\Z^d$ with parameter $p$. Since the value of $H(\bar x)$ only depends on the state of the sites $\bar y \in \N_0^d$ with $\Vert \bar y \Vert < \Vert \bar x \Vert$, $\P_{p,d+1} \circ T^{-1} = \P_{p,d}$. Thus, if $q > q_c(\rho,d)$, we have that
\begin{align}\label{eq:rhopercas}
 1 	& = \P_{p,d}(\text{$\rho$-percolation occurs}) \notag\\
	& = \P_{p,d+1} \Big(\text{there exists an admissible reversed $\lambda$-path } 0 = (\bar x_0, h_0), (\bar x_1, h_1), \dots \\
	& \hspace{300pt} \text{ s.t. } \limsup_{n \rightarrow \infty} \frac{1}{n} h_n \leq 1-\rho \Big). \notag
\end{align}
Now choose $\rho > 1-\alpha$ and set $\delta := 1- \rho + (\alpha - (1-\rho))/2 \in ( 1-\rho, \alpha)$. Then \eqref{eq:rhopercas} implies the existence of a (deterministic) $N \in \N$ such that for all $n \geq N,$
\begin{align*}
\P_{p,d+1}\Big(& \text{there exists an admissible reversed $\lambda$-path } \\ 
	    & \qquad 0 = (\bar x_0, h_0), (\bar x_1, h_1), \dots, (\bar x_n, h_n)  \text{ s.t. }  h_n \leq \delta n\Big) \geq \frac{1}{2}.
\end{align*}
Note that if there exists an admissible reversed $\lambda$-path from the origin to some $(\bar x_n, h_n)$ with $h_n \leq \delta n$, then there actually exists an admissible $\lambda$-path from $L^{\alpha,d}_{\eta} - \floor{(\alpha - \delta)n}e_{d+1} $ to the origin. Thus, by translation invariance of $\P_{p,d+1}$, we obtain that
\begin{align*}
 \forall n \geq N:\; \P_{p,d+1} \big( L^{\alpha,d}_{\eta} \rightarrowtail (0, \floor{(\alpha - \delta)n}) \big) \geq \frac{1}{2}
\end{align*}
which, since $\alpha - \delta >0$, implies
\begin{align*}
 \P_{p,d+1}\left( (\mathsf{LIP}^{\alpha,d}_{\eta})^c\right) = \lim_{n \rightarrow \infty} \P_{p,d+1} \big( L^{\alpha,d}_{\eta} \rightarrowtail (0, (\alpha - \delta)n ) \big) \geq \frac{1}{2}.
\end{align*}
By Proposition \ref{prop:equiv} we deduce that $\P_p\left(\mathsf{LIP}^{\alpha,d}_{\eta}\right) = 0$ and
hence $q \geq q_L(\alpha, d,d)$. We have thus shown that for any $\rho > 1-\alpha$ one has $ \;q_c(\rho,d) \geq q_L(\alpha,d,d)$. Since $q_c(\rho,d)$ is continuous in $\rho$ by Lemma \ref{lem:rho},
then the claim follows from \eqref{eq:RhoConst}. 
\end{proof}


\begin{lemma}[Criterion for non-existence of an open Lipschitz surface] \label{lem:CritLSNonExist}
 For any $\alpha >0$, and $d \in \N$ define 
 \begin{align*} 
  T := \inf \{ m \in \N_0 \mid \exists \bar x \in \N_0^d:\; \Vert \bar x \Vert_1 = m \text{ and } (\bar x, \Vert \bar x \Vert_1) \text{ is closed}\}.
 \end{align*}
If for $p \in (0,1)$ one has 
\begin{align}\label{eq:CritLSNonExist}
 \E_p[T] < \frac{1}{1-\alpha},
\end{align}
then $\P$-a.s. there exists no open Lipschitz surface and $q= 1-p \geq q_L(\alpha,d,d)$. 
\end{lemma}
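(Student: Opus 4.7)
The plan is to show that, under the hypothesis $\E_p[T] < 1/(1-\alpha)$, the quantity $F^{\alpha, d}_{\eta}(0)$ from \eqref{eq:LSfromlambda} is almost surely infinite for the relevant choice $\eta = (1, \ldots, 1)$. By the observation immediately following \eqref{eq:LSfromlambda}, this rules out the existence of an open Lipschitz surface above $L^{\alpha, d}_{\eta}$ and yields $q \geq q_L(\alpha, d, d)$. Fixing $h \in \N$, it suffices to construct an admissible $\lambda$-path from some $y \in L^{\alpha, d}_{\eta}$ to $(0, h-1)$ or, equivalently (by the notion of reversed admissible $\lambda$-path introduced at the beginning of Section \ref{sec:proofs}), a reversed admissible $\lambda$-path starting at $(0, h-1)$ and terminating on $L^{\alpha, d}_{\eta}$.

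\textbf{Iterative construction.} I would build such a reversed admissible $\lambda$-path phase-by-phase, starting at $v_0 := (0, h-1)$. In the $i$-th phase, initiated at $v_{i-1}$, one first takes steps of the form $+e_{d+1} + e_{j_{\ell}}$ (valid reversed $\lambda$-path moves, carrying no closedness constraint) along the positive-orthant diagonal $\{v_{i-1} + (\bar x, \Vert \bar x \Vert_1) : \bar x \in \N_0^d\}$ until reaching the first closed site on this diagonal; let $T_i$ be the $\ell_1$-distance traversed in the $\bar x$-coordinate, and (to break ties) select, say, the lex-smallest such closed site. Then execute a single $-e_{d+1}$ descent step, admissible precisely because that site is closed. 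The key disjointness observation is that, because $\bar x \in \bar X_{i-1} + \N_0^d$ implies $\Vert \bar x - \bar X_{i-1} \Vert_1 = \Vert \bar x \Vert_1 - M_{i-1}$, the sites probed in phase $i$ all satisfy $x_{d+1} - \Vert \bar x \Vert_1 = v_{i-1, d+1} - M_{i-1} = h - i$; since different phases yield different values of this invariant, they probe disjoint sets of sites, so by the product structure of $\P_p$ the random variables $T_1, T_2, \ldots$ are i.i.d.\ copies of $T$.

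\textbf{Analysis and conclusion.} Setting $M_n := T_1 + \cdots + T_n$ and $\bar X_n := \sum_{i=1}^n \bar x_i \in \N_0^d$ (with $\Vert \bar X_n \Vert_1 = M_n$), one has $v_n = (\bar X_n, (h-1) + M_n - n)$, and the height of $v_n$ relative to $L^{\alpha, d}_{\eta}$ is
\begin{align*}
r_n := (h-1) + (M_n - \lfloor \alpha M_n \rfloor) - n.
\end{align*}
From $(1-\alpha)M_n \leq M_n - \lfloor \alpha M_n \rfloor \leq (1-\alpha)M_n + 1$ and the strong law of large numbers one obtains $r_n/n \to (1-\alpha)\E_p[T] - 1 < 0$ almost surely, so $r_n \to -\infty$ a.s. Moreover, within each phase $r$ is nondecreasing during the exploration substep and drops by exactly $1$ at the descent, so a short check gives $r_n - r_{n-1} \geq -1$. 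Combined with $r_0 = h-1 \geq 0$, this forces $r_{\tau} = 0$ at the first time $\tau := \inf\{n : r_n \leq 0\}$, which is a.s.\ finite. Hence $v_{\tau} \in L^{\alpha, d}_{\eta}$; truncating and reversing the path then yields the sought admissible $\lambda$-path from $v_\tau$ to $(0, h-1)$, so $F^{\alpha, d}_{\eta}(0) \geq h$ for every $h \in \N$. The main obstacle is the verification of the i.i.d.\ claim for $T_1, T_2, \ldots$, which comes down to the disjointness-on-hyperplanes observation made above.
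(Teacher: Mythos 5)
Your construction is essentially the paper's own proof: both build a reversed admissible $\lambda$-path in phases, each phase consisting of a free diagonal ascent through $\N_0^d$ until the first closed site (giving i.i.d.\ copies of $T$ via the disjointness of the hyperplanes $x_{d+1}-\Vert\bar x\Vert_1 = \mathrm{const}$) followed by one descent, and both conclude via the law of large numbers from the drift $(1-\alpha)\E_p[T]-1<0$. The only (harmless) difference is the endgame: you stop the path at the first time its relative height reaches $0$ (using the increment bound $r_n-r_{n-1}\ge -1$) to land exactly on $L^{\alpha,d}_{\eta}$, whereas the paper lets the path run, shifts the plane down by $\Delta n$ via translation invariance, and invokes the zero--one law of Proposition \ref{prop:equiv}.
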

Condition \eqref{eq:CritLSNonExist} has an intuitive interpretation: $1/(1-\alpha)$ is the number of \lq downward-diagonal\rq\ steps a $\lambda$-path can take before decreasing its distance to the plane with inclination $\alpha$ by one. $\E_p[T]$ on the other hand is the expected number of such steps an admissible $\lambda$ path must take before encountering a closed site and thus being able to take an upwards step. \eqref{eq:CritLSNonExist} therefore means that this path will -- on average -- encounter a closed site strictly before decreasing its distance to the plane by one, thus increasing the distance in the long run and preventing the existence of an open Lipschitz surface above it.


\begin{proof}
As in the proof of Proposition \ref{prop:UBqAsympD}, the idea is to construct admissible reversed $\lambda$-paths starting in $0$ such that their endpoints (i.e. the starting points of the respective $\lambda$-paths) are arbitrarily far below $L^{\alpha,d}_{\eta}$. With a simple shifting argument we can then see that the Lipschitz surface would, with probability bounded away from $0$, have to have arbitrarily large height in $0$ and can therefore almost surely not exist.
 
 We begin with the construction of the \emph{reversed} $\lambda$-paths. To this end, set $ X_0:=Y_0:=0$. Let $(\bar z_i)_{i \in \N_0}$ be an ordering of $\N_0^d$ compatible with $\Vert \cdot \Vert_1$ in the sense that $\Vert z_{i+1} \Vert_1 \geq \Vert z_i \Vert_1$, for all $i \in \N_0$. Then define for any $n \in \N_0$,
 \begin{align*}
  \iota_{n+1} 	& := \inf\{i \in \N_0 \mid (\bar z_i, \Vert\bar z_i \Vert_1) + Y_n \text{ is closed}\},\\
  X_{n+1} 	& := (\bar z_{\iota_n}, \Vert \bar z_{\iota_n}\Vert_1), \\
  Y_{n+1}	& := Y_n + X_{n+1} - e_{d+1}.
 \end{align*}
By construction, there always exists an admissible $\lambda$-path from any $Y_n$ to 0. Note also that $(\iota_n)_{n \in \N}$ and $(X_n)_{n \in \N}$ are i.i.d. sequences where $\iota_1$ is geometric on $\N_0$ with parameter $q$ and $\Vert \bar X_1 \Vert_1 = X_1 \cdot e_{d+1}$
is distributed as $T$. 

We are now interested in the height of the starting points of these $\lambda$-paths relative to $L^{\alpha,d}_{\eta}$. This is given by 
\begin{align*}
 H(n)	& := \floor{\alpha \Vert \bar Y_n \Vert_1} - Y_n\cdot e_{d+1}\\
	& = \Big \lfloor \alpha \sum_{j = 1}^n \Vert \bar X_j \Vert_1 \Big \rfloor - \sum_{j=1}^n ( X_n - e_{d+1})\cdot e_{d+1}\\
	& =  \Big \lfloor \alpha \sum_{j = 1}^n \Vert \bar X_j \Vert_1 \Big \rfloor - \sum_{j = 1}^n \Vert \bar X_j \Vert_1 + n.
\end{align*}
 The law of large numbers then yields
 \begin{align*}
  \lim_{n \rightarrow \infty} \frac{1}{n}H(n) = (\alpha - 1)\E_p[T] +1 \qquad \P_p\text{-a.s.}
 \end{align*}
and the right-hand side is strictly negative by assumption. Thus with $\Delta := -\left( (\alpha - 1)\E_p[T] +1 \right)/2 >0$ we have in particular the existence of a deterministic $N \in \N$ such that
\begin{align*}
 \forall n \geq N: \; \P_p( H(n) \leq - \Delta n) \geq \frac{1}{2}.
\end{align*}
Now note that on the event $\{H(n) \leq - \Delta n\}$ there exists an admissible $\lambda$-path starting in $L^{\alpha,d}_{\eta} - \Delta n e_{d+1}$ and reaching 0, since $Y_n$ is below the plane $L^{\alpha,d}_{\eta} - \Delta n e_{d+1}$. Hence, by translation invariance of $\P_p$ we have that
\begin{align*}
 \forall n \geq N: \; \P_p(L^{\alpha,d}_{\eta} \rightarrowtail (0, \Delta n) \geq \frac{1}{2}
\end{align*}
which implies
\begin{align*}
 \P_p\left( (\mathsf{LIP}^{\alpha,d}_{\eta})^c\right) = \lim_{n \rightarrow \infty} \P_p(L^{\alpha,d}_{\eta} \rightarrowtail (0, \Delta n) \geq \frac{1}{2}.
\end{align*}
By Proposition \ref{prop:equiv}, $\P_p\left(\mathsf{LIP}^{\alpha,d}_{\eta}\right) = 0$ and $p \leq p_L(\alpha,d,d)$, i.e., $q \geq q_L(\alpha, d,d)$. 
\end{proof}


\begin{proof}[Proof of Proposition \ref{prop:UBqAsympAlpha}]
 Recall the ordering $(\bar z_i)_{i \in \N_0}$ of $\N_0^d$ compatible with $\Vert \cdot \Vert_1$ from the proof of Lemma \ref{lem:CritLSNonExist} and define the random variable
 \begin{align*}
 \iota_1 :=  \inf\{i \in \N_0 \mid (\bar z_i, \Vert\bar z_i \Vert_1) \text{ is closed}\}, 
 \end{align*}
 which has a geometric distribution on $\N_0$ with parameter $q$. With $B(j):= \{ \bar x \in \N_0^d \mid \Vert \bar x \Vert_1 \leq j\}$ denoting the ball with radius $j \in \N_0$, define the function 
 \begin{align*}
  r(i) := \inf\{j \in \N_0 \mid \vert B(j) \vert - 1 \geq i\}
 \end{align*}
that gives the radius of the smallest ball such that its cardinality (without the origin) is larger than
or equal to a given $i \in \N_0$. Note that $r(\iota_1)$ is distributed as $T$,
 for $T$ defined in Lemma \ref{lem:CritLSNonExist}.
Using
\begin{align*}
 \vert B(j) \vert = \binom{j+d}{d} \geq  \frac{(j+1)^d}{d!}
\end{align*}
we obtain
\begin{align*}
 i \geq \vert B(r(i)-1) \vert \geq  \frac{r(i)^d}{d!}
\end{align*}
 and can thus upper bound the expectation
 \begin{align*}
  \E_p[T] = \E_p[r(\iota_1)] 	& \leq \left( d!\E_p[\iota_1] \right)^{\frac{1}{d}}  \leq \left( d!\left(\frac{1}{q}-1\right)\right)^{\frac{1}{d}},
 \end{align*}
 where we used Jensen's inequality in the first inequality.
The right-hand side is strictly smaller than $1/(1-\alpha)$ if and only if 
\begin{align*}
 q > \frac{d!(1-\alpha)^d}{1+d!(1-\alpha)^d}.
\end{align*}
Thus Lemma \ref{lem:CritLSNonExist} then implies that for such values of $q$ no open Lipschitz surface can exist, i.e. $q \geq q_L(\alpha,d,d)$, and the claim follows.
\end{proof}

\subsection*{Acknowledgement} 
We thank Patrick W. Dondl for helpful suggestions and valuable discussions.

\bibliographystyle{alpha}

\bibliography{Bib}

\end{document}